\documentclass[a4paper,reqno]{amsart}
\usepackage{indentfirst}
\usepackage{amsfonts}
\usepackage{amsthm}
\usepackage{amssymb}
\usepackage{amsmath}
\usepackage{thmtools}
\usepackage{tikz}

\usepackage{todonotes}
\usetikzlibrary{decorations.markings}
\usetikzlibrary{patterns}
\usepackage{enumitem}
\usepackage{setspace}
\usepackage{hyperref}
\usetikzlibrary{matrix,arrows,positioning}

\newtheorem{Lem}{Lemma}[section]
\newtheorem{Prop}[Lem]{Proposition}
\newtheorem*{Def}{Definition}

\theoremstyle{plain}
\newtheorem{Thm}[Lem]{Theorem}

\theoremstyle{definition}
\declaretheorem[numbered=no,name=Example,qed={\lower-0.3ex\hbox{$\triangleleft$}}]{Ex}
\newtheorem*{Rem}{Remark}
\newtheorem*{Rems}{Remarks}

\newcommand{\wt}{\text{\textnormal{wt}}}
\newcommand{\Dim}{\text{\textnormal{\textbf{dim}}}}
\newcommand{\Sym}{\text{\textnormal{Sym}}}

\mathchardef\mhyphen="2D

\makeatletter
\providecommand*{\twoheadrightarrowfill@}{%
  \arrowfill@\relbar\relbar\twoheadrightarrow
}
\providecommand*{\xtwoheadrightarrow}[2][]{%
  \ext@arrow 0579\twoheadrightarrowfill@{#1}{#2}%
}
\makeatother

\def\itemNum$#1${\item $\displaystyle#1$
   \hfill\refstepcounter{equation}(\theequation)}
\def\pser#1{[\![#1]\!]} 

\begin{document}

\title[Self-dual quiver moduli and orientifold DT invariants]{Self-dual quiver moduli and orientifold Donaldson-Thomas invariants}

\author[M.\,B. Young]{Matthew B. Young}
\address{Department of Mathematics\\
The University of Hong Kong\\
Pokfulam, Hong Kong}
\email{myoung@maths.hku.hk}

\date{\today}

\keywords{Moduli spaces of quiver representations, Hall algebras, Donaldson-Thomas invariants, orientifolds.}
\subjclass[2010]{Primary: 16G20 ; Secondary 14N35, 14D21}

\begin{abstract}
Motivated by the counting of BPS states in string theory with orientifolds, we study moduli spaces of self-dual representations of a quiver with contravariant involution. We develop Hall module techniques to compute the number of points over finite fields of moduli stacks of semistable self-dual representations. Wall-crossing formulas relating these counts for different choices of stability parameters recover the wall-crossing of orientifold BPS/Donaldson-Thomas invariants predicted in the physics literature. In finite type examples the wall-crossing formulas can be reformulated in terms of identities for quantum dilogarithms acting in representations of quantum tori. 
\end{abstract}

\maketitle


\setcounter{footnote}{0}

\section*{Introduction}

Representations of quivers and the geometry of their moduli spaces have found applications in  many areas of mathematics, such as the theory of quantum groups, derived categories of coherent sheaves and Donaldson-Thomas theory. Not unrelated, they have also found applications in quantum field theory and string theory.

Quiver moduli were originally constructed by King \cite{king1994} who showed that the definition of stability arising from geometric invariant theory coincides with a purely representation theoretic definition, called slope stability. The latter is modelled on slope stability of vector bundles over curves. More generally, stability of principal bundles over curves, with structure group a classical group $G$, can be understood in terms of slope stability \cite{ramanathan1975}: from the point of view of the vector bundle associated to the defining representation, the potentially destabilizing subbundles are required to be isotropic.

The focus of this paper is the study of moduli spaces of quiver theoretic analogues of $G$-bundles over curves and their relationship with enumerative invariants in string theory with orientifolds. To be more precise, we study moduli spaces of self-dual representations of a quiver with contravariant involution. These representations were introduced by Derksen and Weyman \cite{derksen2002}. While the ordinary representation theory of a quiver assigns a vector space to each node and a linear map to each arrow, the self-dual representation theory in addition endows the vector spaces with orthogonal or symplectic forms and imposes symmetry conditions on the linear maps. From a categorical point of view, the quiver involution can be used to define an exact contravariant endofunctor $S$ of $Rep_k(Q)$ and an isomorphism of functors $\Theta: \mathbf{1}_{Rep_k(Q)} \xrightarrow[]{\sim} S^2$. This makes $Rep_k(Q)$ into an abelian category with duality and the self-dual representations are recovered as its self-dual objects.

We introduce a notion of stability for self-dual quiver representations that is a common generalization of quiver and $G$-bundle stability. This notion coincides with the natural definition of stability arising in geometric invariant theory (Theorem \ref{thm:selfDualStability}). Since the stability parameters in the self-dual theory have less degrees of freedom than their ordinary counterparts, there are in general many strictly semistable self-dual representations. This causes the moduli spaces semistable self-dual representations to be highly singular. Even the stable moduli spaces need not be smooth, having orbifold singularities at non-simple stable self-dual representations. Because of these singularities it will often be more natural to consider moduli stacks of self-dual representations.

A powerful tool in the study of quiver moduli is the Hall algebra. Under assumptions to ensure smoothness, analogous to the coprime assumption for vector bundles over a curve, Hall algebras can be used to compute Poincar\'{e} polynomials of quiver moduli \cite{reineke2003}. This approach uses the Weil conjectures to relate the number of $\mathbb{F}_q$-rational points of quiver moduli to their Poincar\'{e} polynomials. A key r\^{o}le is played by Reineke's integration map, an algebra homomorphism from the Hall algebra to a quantum torus. This map is used to translate categorical identities in the Hall algebra into numerical identities in the quantum torus. Without any smoothness assumptions the same techniques, with moduli stacks in place of moduli spaces, can also be used to study the motivic DT theory of quivers \cite{mozgovoy2013}, \cite{mozgovoy2013c}. Generalizations of the Hall algebra and integration map, some of which remain conjectural, are central to the motivic DT theory of three dimensional Calabi-Yau categories \cite{joyce2012}, \cite{kontsevich2008}, \cite{kontsevich2011}.

The analogue of the Hall algebra for self-dual representations was introduced in \cite{mbyoung2012}. There the self-dual extension structure of the representation category, controlling three term sequences consisting of a self-dual representation, an isotropic subrepresentation and the resulting self-dual quotient, was used to construct a module over the Hall algebra, called the Hall module. In this paper we develop Hall module techniques to study self-dual quiver moduli. The first important result in this direction is the construction of a Hall module integration map in Theorem \ref{thm:sdIntMap}. This is a morphism over the Hall algebra integration map with values in a naturally defined representation of the quantum torus. By modifying arguments of \cite{reineke2003}, in Theorem \ref{thm:recReso} we solve the Harder-Narasimhan recursion for self-dual representations. Applying the Hall module integration map leads immediately to an explicit formula for the number of $\mathbb{F}_q$-rational points of stacks of semistable self-dual representations; see Theorem \ref{thm:countInv}. This provides a quiver theoretic analogue of Laumon and Rapoport's computation of the Poincar\'{e} series of the moduli stack of semistable $G$-bundles over a curve \cite{laumon1996}.

One of the primary motivations of this paper is the development of a mathematical framework for the counting of BPS states in string theory with orientifolds. In the presence of an orientifold, the $D$-brane category $\mathcal{B}$ of the parent theory is endowed with a duality structure $(S, \Theta)$ \cite{diaconescu2007}, \cite{hori2008}. The functor $S$ is the parity functor, sending a $D$-brane to its orientifold image, while $\Theta: \mathbf{1}_{\mathcal{B}} \xrightarrow[]{\sim} S^2$ encodes the signs of the orientifold planes. The $D$-brane configurations in the orientifold theory are precisely the self-dual configurations of the parent theory. Self-dual quiver representations therefore provide a relatively simple example of this set-up. The appearance of orthogonal and symplectic structures reflects the familiar reduction of structure group of Chan-Paton bundles on $D$-branes lying on orientifold planes. Not unrelated, self-dual quiver representations also arise in the study of worldvolume gauge theories on $D$-branes in orientifold backgrounds \cite{douglas1996}.

The cohomology of moduli spaces of semistable $D$-branes is closely related to the BPS states of the theory \cite{denef2011}. Similarly, BPS states in the orientifold theory arise from cohomology of the moduli of orientifold invariant $D$-branes and, for particular theories, should provide an orientifold version of DT invariants. In \cite{walcher2009} it is suggested that real Gromov-Witten invariants are related (via a MNOP type formula) to orientifold DT invariants. Expected properties of orientifold DT invariants for particular models were discussed from a physical perspective in \cite{krefl2010b}. However, a basic definition of the invariants was missing. In this paper, motivated by \cite{kontsevich2008}, \cite{kontsevich2011} we define the orientifold DT series of a quiver with involution as the generating function counting $\mathbb{F}_q$-rational points of stacks of semistable self-dual representations, computed in Theorem \ref{thm:countInv} above. The Hall module formalism leads immediately to a wall-crossing formula, Theorem \ref{thm:sdWallCross}, relating orientifold DT series with different stability parameters. In finite type examples the wall-crossing formulas can be reformulated as quantum dilogarithm identities holding representations of quantum tori. We use these identities to define orientifold DT invariants of finite type quivers; see equation \eqref{eq:oridtFactorization}. These invariants satisfy an orientifold modification of the primitive wall-crossing formula proposed in the physics literature \cite{denef2010}. We take this as strong evidence that the Hall module framework is indeed applicable to the study of orientifold BPS states. In Section \ref{sec:quivPot} we explain how many of the above results can be extended to quivers with potential using equivariant Hall algebras.

In \cite{harvey1998} it was proposed that the space of BPS states in a quantum field theory or string theory with extended supersymmetry has the structure of an algebra, the product of two states encoding their possible bound states. Mathematical models for this algebra include variants of the Hall algebra, most notably its motivic \cite{joyce2007}, \cite{kontsevich2008} and cohomological \cite{kontsevich2011} versions. See also \cite[\S 8]{chuang2014}. Imposing different structures on the physical theory leads to different algebraic structures on its space of BPS states. For example, the space of BPS states in a theory with defects, which can also be thought of as a space of open BPS states, is expected to form a representation of the algebra of BPS states of the theory without defects \cite{gukov2011}. In some examples these open BPS modules are modelled using framed objects of the $D$-brane category \cite{soibelman2014}. The Hall modules used in this paper are different, modelling instead the space of BPS states in a string theory with orientifolds together with an action of the BPS states of the parent theory. These modules are naturally graded by the Grothendieck-Witt group of the $D$-brane category with orientifold duality, an algebraic version of Atiyah's $KR$-theory. This is in agreement with the physical prediction that charges of $D$-branes in orientifold theories are elements of real variants of $K$-theory \cite{witten1998}, \cite{gukov2000}, \cite{hori2008}.

\subsection*{Notation}
Throughout this paper $k$ denotes a fixed ground field. The characteristic of $k$ is assumed to be different from two. We will primarily be interested in the cases $k= \mathbb{C}$ and $k= \mathbb{F}_q$, a finite field with $q$ elements.

If $S$ is a finite set, then $\# S \in \mathbb{Z}_{\geq 0}$ denotes its cardinality.

\subsubsection*{Acknowledgements}
The author thanks Wu-yen Chuang, Zheng Hua, Daniel Krefl, Michael Movshev and Graeme Wilkin for helpful conversations and BIOSUPPORT at the University of Hong Kong for computational support. The author also thanks the Institute for Mathematical Sciences at the National University of Singapore for support and hospitality during the program `The Geometry, Topology and Physics of Moduli Spaces of Higgs Bundles', where this work was completed.

\section{Representation theory of quivers}
\label{sec:quiverReps}

In this section we recall some preliminary material about (self-dual) representations of quivers.

\subsection{Quiver representations}

Let $Q$ be a quiver with finite sets of nodes $Q_0$ and arrows $Q_1$. Denote by $\Lambda_Q=\mathbb{Z} Q_0$ the free abelian group generated by $Q_0$. The monoid of dimension vectors is $\Lambda_Q^+ = \mathbb{Z}_{\geq 0} Q_0$.

A $k$-representation of $Q$ is a finite dimensional $Q_0$-graded vector space $V= \bigoplus_{i \in Q_0} V_i$ together with a linear map $V_i \xrightarrow[]{v_{\alpha}} V_j$ for each arrow $i \xrightarrow[]{\alpha} j \in Q_1$. The dimension vector of $V$ is $\Dim  \, V =\sum_{i \in Q_0} (\dim\, V_i)  i \in \Lambda_Q^+$ and its dimension is $\dim\, V = \sum_{i\in Q_0} \dim\, V_i \in \mathbb{Z}_{\geq 0}$.

The category $Rep_k(Q)$ of $k$-representations of $Q$ is abelian and hereditary. The Euler form of $Rep_k(Q)$ is defined by
\[
\chi(U,V) = \dim \, Hom(U,V) - \dim \, Ext^1(U,V)
\]
and descends to the bilinear form on $\Lambda_Q$ given by
\[
\chi (d, d^{\prime} ) = \sum_{i \in Q_0} d_i d_i^{\prime} - \sum_{i \xrightarrow[]{\alpha} j \in Q_1 } d_i d_j^{\prime}.
\]
The associated skew-symmetric bilinear form on $\Lambda_Q$ is $\langle d, d^{\prime}  \rangle = \chi (d, d^{\prime} ) - \chi (d^{\prime}, d )$.

\subsection{Self-dual quiver representations}

In this section we record some basic material about self-dual representations of a quiver with contravariant involution.

\begin{Def}
An involution $\sigma$ of a quiver $Q$ is a pair of involutions, $Q_0 \xrightarrow[]{\sigma} Q_0$ and $Q_1 \xrightarrow[]{\sigma} Q_1$, such that
\begin{enumerate}
\item if $i \xrightarrow[]{\alpha} j$ is an arrow, then $\sigma(j) \xrightarrow[]{\sigma(\alpha)} \sigma(i)$, and
\item all arrows of the form $ i \xrightarrow[]{\alpha} \sigma(i)$ are fixed by $\sigma$.
\end{enumerate}
\end{Def}

Let $(Q,\sigma)$ be a quiver with involution. There is an induced involution of $\Lambda_Q$, again denoted by $\sigma$, and we write $\Lambda_Q^{\sigma}$ for the subgroup of $\sigma$-invariant dimension vectors. There is a canonical map $H:\Lambda_Q \rightarrow \Lambda_Q^{\sigma}$ given by $d \mapsto d + \sigma(d)$.

 A duality structure on $(Q, \sigma)$ is a pair of functions, $s: Q_0 \rightarrow \{ \pm 1 \}$ and $\tau: Q_1 \rightarrow \{ \pm 1 \}$, such that $s$ is $\sigma$-invariant and $
\tau_{\alpha} \tau_{\sigma(\alpha)} = s_i s_j$ for all arrows $i \xrightarrow[]{\alpha} j$.

\begin{Def}
A self-dual representation of $(Q,\sigma)$ (with respect to a fixed duality structure $(s, \tau)$) is a pair $(M,\langle \cdot, \cdot \rangle)$ consisting of a representation $M$ and a non-degenerate bilinear form $\langle \cdot, \cdot \rangle$ on the total space $\bigoplus_{i \in Q_0} M_i$ such that
\begin{enumerate}
\item the vector spaces $M_i$ and $M_j$ are orthogonal unless and $i =\sigma(j)$,

\item the restriction of the form $\langle \cdot, \cdot \rangle$ to $M_i + M_{\sigma(i)}$ is $s_i$-symmetric,
\[
\langle x,x^{\prime} \rangle = s_i \langle x^{\prime}, x \rangle, \;\;\;\;\;\;\; \forall \, x, x^{\prime} \in M_i + M_{\sigma(i)},
\]
and
\item for all arrows $i \xrightarrow[]{\alpha} j$ the structure maps of $M$ satisfy
\begin{equation}
\label{eq:strSymm}
\langle m_{\alpha} x,x^{\prime} \rangle - \tau_{\alpha}  \langle x, m_{\sigma(\alpha)} x^{\prime} \rangle =0, \;\;\;\;\;\;\; \forall \, x \in M_i, \; x^{\prime} \in M_{\sigma(j)}.
\end{equation}
\end{enumerate}
\end{Def}

Let $M$ be a self-dual representation. If $i=\sigma(i)$ then $M_i$ is endowed with an orthogonal or symplectic form. If instead $i \neq \sigma(i)$, then $M_i \oplus M_{\sigma(i)} \simeq M_i \oplus M_i^{\vee}$ is endowed with the canonical hyperbolic orthogonal or symplectic form. The self-dual representations for $\tau \equiv -1$ and $s \equiv 1$ or $s \equiv -1$ recover the orthogonal or symplectic representations of Derksen and Weyman \cite{derksen2002}. Self-dual representations for more general $(s, \tau)$ were studied in \cite{zubkov2005} where they were called supermixed representations.

There is a categorical interpretation of self-dual representations that will be useful below. Given a duality structure, define an exact contravariant functor $S: Rep_k(Q) \rightarrow Rep_k(Q)$ as follows. At the level of objects, $S(M,m)$ is given by
\[
S(M)_i =M^{\vee}_{\sigma(i)}, \;\;\; S(m)_{\alpha} = \tau_{\alpha} m_{\sigma(\alpha)}^{\vee}.
\]
Here $(-)^{\vee} = Hom_k(-, k)$ is the linear duality functor on the category of finite dimensional vector spaces. Given a morphism $\phi: M \rightarrow M^{\prime}$ with components $\phi_i : M_i \rightarrow M_i^{\prime}$, the morphism $S(\phi):S(M^{\prime}) \rightarrow S(M)$ is defined by its components $S(\phi)_i = \phi^{\vee}_{\sigma(i)}$. Write $\mbox{ev}$ for the canonical evaluation isomorphism from a finite dimensional vector space to its double dual. The assumptions on $(s, \tau)$ imply that $\Theta= \bigoplus_{i \in Q_0} s_i \cdot \mbox{ev}_i$ defines a natural isomorphism from the identity functor $\mathbf{1}_{Rep_k(Q)}$ to the square $S^2$. Moreover, for each representation $U$ the identity $S(\Theta_U)  \circ \Theta_{S(U)} = \mathbf{1}_U$ holds.

The above discussion shows that the triple $(Rep_k(Q), S, \Theta)$ is an example of an abelian category with duality \cite{balmer2005}. In this setting, a self-dual object is a pair $(M, \psi_M)$, or just $M$ for short, consisting of an object $M$ and an isomorphism $\psi_M: M \xrightarrow[]{\sim} S(M)$ satisfying $S(\psi_M) \Theta_M = \psi_M$. An isomorphism $\phi: M \xrightarrow[]{\sim} M^{\prime}$ of self-dual objects is called an isometry if $\psi_M =  S(\phi) \psi_{M^{\prime}} \phi$. We write $M \simeq_S M^{\prime}$ if $M$ and $M^{\prime}$ are isometric. The group of self-isometries of $M$ is denoted $Aut_S(M)$.

Given a self-dual object $M$, the bilinear form $\langle x,x^{\prime} \rangle = \psi_M(x) (x^{\prime})$ gives $M$ the structure of a self-dual representation. This defines an equivalence between the groupoids of self-dual objects and self-dual representations, where the morphisms in each category are the isometries. We will use this equivalence throughout the paper. 

\begin{Ex}
For any representation $U$, the hyperbolic representation on $U$ is the self-dual object $H(U) = (U \oplus S(U), \psi_{H(U)}= \left( \begin{smallmatrix} 0 & \mathbf{1}_{S(U)} \\ \Theta_U & 0 \end{smallmatrix} \right))$.
\end{Ex}

Let $M$ be a self-dual representation with subrepresentation $i: U \hookrightarrow M$. The orthogonal $U^{\perp} \subset M$ is defined to be the kernel of the composition
\[
M \xrightarrow[]{\psi_M} S(M) \xrightarrow[]{S(i)} S(U).
\]
The subrepresentation $U$ is called isotropic if $U \subset U^{\perp}$. In this case the self-dual structure on $M$ induces a canonical self-dual structure on the quotient $U^{\perp} \slash U$, denoted by $M /\!/ U$.

For any representation $U$ and $i \geq 0$ the pair $(S, \Theta)$ gives $Ext^i(S(U),U)$ the structure of a representation of $\mathbb{Z}_2$. Decompose this representation into its trivial and sign subrepresentations,
\[
Ext^i(S(U),U)= Ext^i(S(U),U)^{S} \oplus Ext^i(S(U),U)^{-S},
\]
and define
\[
\mathcal{E}(U) = \dim \,  Hom(S(U),U)^{-S} - \dim \, Ext^1(S(U),U)^S.
\]
The function $\mathcal{E}$ will play the r\^{o}le of the Euler form for the category with duality $(Rep_k(Q), S, \Theta)$.

It was shown in \cite[Proposition 3.3]{mbyoung2012} that $\mathcal{E}(U)$ depends only on $\Dim \, U$ and so defines a function $\mathcal{E}: \Lambda_Q \rightarrow \mathbb{Z}$. Explicitly, from \textit{loc. cit.} we have
\begin{equation}
\label{eq:modEuler}
\mathcal{E}(d)=\sum_{i \in Q_0^{\sigma}} \frac{d_i(d_i -s_i)}{2}  + \sum_{i \in Q_0^+} d_{\sigma(i)} d_i - \sum_{\sigma(i) \xrightarrow[]{\alpha} i \in Q_1^{\sigma}} \frac{d_i(d_i + \tau_{\alpha} s_i)}{2} -\sum_{i \xrightarrow[]{\alpha} j  \in Q_1^+} d_{\sigma(i)} d_j.
\end{equation}
Here $Q_0=Q_0^+ \sqcup Q_0^{\sigma}  \sqcup Q_0^-$ is a partition with $Q_0^{\sigma}$ consisting of the nodes fixed by $\sigma$ and $\sigma(Q_0^+)= Q_0^-$. The partition of $Q_1$ is analogous.

Below we will also use the function $\tilde{\mathcal{E}}: \Lambda_Q \rightarrow \mathbb{Z}$ defined by $\tilde{\mathcal{E}}(d) = \mathcal{E}(d) - \mathcal{E}(\sigma(d))$.

\section{Moduli spaces of self-dual quiver representations}
\label{sec:stability}

In this section we introduce a notion of stability for self-dual representations and use geometric invariant theory to construct moduli spaces of self-dual representations.

\subsection{\texorpdfstring{$\sigma$}{sigma}-Stability}

Fix an element $\theta \in \Lambda_Q^{\vee} = Hom_{\mathbb{Z}}(\Lambda_Q, \mathbb{Z})$, called a stability. The slope of a non-zero representation $U$ with respect to $\theta$ is
\[
\mu(U)= \frac{\theta( U)}{\dim \, U} \in \mathbb{Q}.
\]
Here $\theta(U)$ is shorthand for $\theta( \Dim\, U)$.

\begin{Def}[\cite{king1994}]
A representation $U$ is semistable if $\mu(V) \leq \mu(U)$ for all non-zero subrepresentations $V \subsetneq U$. If this inequality is strict for all such $V$, then $U$ is called stable.
\end{Def}

Let $(Q, \sigma)$ be a quiver with involution. Denote by $\sigma^*$ the induced involution of $\Lambda_Q^{\vee}$.

\begin{Def}
A stability $\theta \in \Lambda_Q^{\vee}$ is called $\sigma$-compatible if $\sigma^* \theta = -\theta$.
\end{Def}

If $\theta$ is $\sigma$-compatible, then $\mu(S(U))= - \mu(U)$ for all representations $U$. In particular, the slope of a self-dual representation is necessarily zero.

\begin{Lem}
\label{lem:symmStab}
Let $\theta$ be a $\sigma$-compatible stability. A representation $U$ is semistable (stable) if and only if $S(U)$ is semistable (respectively, stable).
\end{Lem}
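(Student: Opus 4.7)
The plan is to translate the semistability condition for $S(U)$ into a condition on quotients of $U$, then use $\sigma$-compatibility of $\theta$ to identify that condition with semistability of $U$.

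First I would record two basic identities. Since $\sigma$ permutes $Q_0$, for any representation $V$ we have $\dim S(V) = \dim V$ and $\Dim S(V) = \sigma(\Dim V)$, so $\sigma$-compatibility gives
\[
\mu(S(V)) = \frac{(\sigma^*\theta)(\Dim V)}{\dim V} = -\mu(V).
\]
In particular $\mu(S(U)) = -\mu(U)$.

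Next, because $S$ is exact and contravariant with $S^2 \cong \mathbf{1}$ via $\Theta$, the assignment $V \mapsto S(V)$ sets up a bijection between subrepresentations $W \hookrightarrow S(U)$ and quotient representations $U \twoheadrightarrow Q$: given $W \hookrightarrow S(U)$, applying $S$ and composing with $\Theta_U^{-1}$ produces a quotient $U \twoheadrightarrow S(W)$, and conversely $U \twoheadrightarrow Q$ yields a subrepresentation $S(Q) \hookrightarrow S(U)$. Under this bijection non-zero corresponds to non-zero, and proper corresponds to proper.

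Now I would unwind the definition. Recall that $U$ is semistable if and only if every non-zero quotient $Q$ of $U$ satisfies $\mu(Q) \geq \mu(U)$ (this is the standard equivalent form of King's definition, obtained by applying slope inequalities to the kernel). For $S(U)$, semistability says $\mu(W) \leq \mu(S(U))$ for every non-zero subrepresentation $W \subsetneq S(U)$. Writing $W = S(Q)$ for the corresponding non-zero quotient $U \twoheadrightarrow Q$ and using the slope identity, this inequality becomes
\[
-\mu(Q) = \mu(S(Q)) \leq \mu(S(U)) = -\mu(U),
\]
i.e.\ $\mu(Q) \geq \mu(U)$. Running the equivalence over all such $W$ (equivalently, all such $Q$) shows that $S(U)$ is semistable if and only if every non-zero quotient of $U$ has slope at least $\mu(U)$, which is exactly semistability of $U$. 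The stable case is identical with strict inequalities, noting that proper non-zero quotients of $U$ correspond to proper non-zero subrepresentations of $S(U)$.

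There is no real obstacle here; the only point that requires care is verifying the slope identity $\mu(S(V)) = -\mu(V)$ from $\sigma^*\theta = -\theta$ and the fact that $\dim$ is $\sigma$-invariant, together with the sub/quotient duality induced by $S$. Everything else is a straightforward bookkeeping exercise.
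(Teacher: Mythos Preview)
Your proof is correct and follows essentially the same route as the paper: both use the quotient characterization of semistability, the bijection between subobjects of $S(U)$ and quotients of $U$ induced by the exact contravariant functor $S$, and the identity $\mu(S(V)) = -\mu(V)$ coming from $\sigma$-compatibility. You have simply written out in detail what the paper's proof compresses into three sentences.
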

\begin{proof}
The representation $U$ is semistable if and only if $\mu(U) \leq \mu(W)$ for all quotients $U \twoheadrightarrow W$. Since the functor $S$ defines a bijection between quotients of $U$ and subobjects of $S(U)$, the $\sigma$-compatibility of $\theta$ implies the statement for semistability. The argument for stability is identical.
\end{proof}

The following definition is motivated by the stability of principal bundles over a curve with classical structure groups \cite{ramanathan1975}.

\begin{Def}
A self-dual representation $M$ is $\sigma$-semistable if $\mu(V) \leq \mu(M)$ for all non-zero isotropic subrepresentations $V \subset M$. If this inequality is strict for all such $V$, then $M$ is called $\sigma$-stable.\end{Def}

\textit{A priori}, $\sigma$-semistability is strictly stronger than semistability. However, we have the following result. See \cite[Proposition 4.2]{ramanan1980} for the analogous statement for $G$-bundles over curves.

\begin{Prop}
\label{prop:sdSemiStab}
A self-dual representation is $\sigma$-semistable if and only if it is semistable as an ordinary representation.
\end{Prop}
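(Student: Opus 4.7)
The plan is to observe first that the implication \emph{semistable $\Rightarrow$ $\sigma$-semistable} is immediate: the $\sigma$-semistability condition only tests the slope inequality on the subclass of isotropic subrepresentations, which is a subfamily of all subrepresentations. All the content is in the converse. I will argue by contradiction: assume $M$ is $\sigma$-semistable but not semistable, and produce a non-zero isotropic subrepresentation of strictly positive slope.

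To this end, let $V \subseteq M$ be the maximal destabilising subrepresentation, i.e.\ the first step of the Harder-Narasimhan filtration of $M$. Then $V$ is semistable and $\mu(V) > \mu(M)$. Because $\theta$ is $\sigma$-compatible, $\mu(M) = 0$, so $\mu(V) > 0$. Dualising the inclusion $\iota : V \hookrightarrow M$ and composing with the self-dual isomorphism $\psi_M : M \xrightarrow{\sim} S(M)$ produces a surjection
\[
p \;=\; S(\iota) \circ \psi_M : \; M \twoheadrightarrow S(V),
\]
whose kernel is by definition the orthogonal $V^\perp$. By Lemma \ref{lem:symmStab}, $S(V)$ is semistable, and the $\sigma$-compatibility of $\theta$ gives $\mu(S(V)) = -\mu(V) < 0$.

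The key step is to show $V \subseteq V^\perp$, i.e.\ that $p \circ \iota : V \to S(V)$ is the zero map. Let $W$ denote its image. Then $W$ is simultaneously a quotient of the semistable representation $V$, so that $\mu(W) \geq \mu(V) > 0$, and a subrepresentation of the semistable representation $S(V)$, so that $\mu(W) \leq -\mu(V) < 0$. These inequalities are incompatible unless $W = 0$. Hence $V$ is isotropic, but then $V$ is a non-zero isotropic subrepresentation with $\mu(V) > \mu(M) = 0$, contradicting $\sigma$-semistability of $M$.

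The only non-formal step is the slope squeeze on the image $W$; everything else is a direct use of Lemma \ref{lem:symmStab}, the definition of $V^\perp$ recalled earlier, and the standard existence/semistability properties of the maximal destabilising subrepresentation. The main conceptual point — and what could be called the obstacle — is recognising that $\sigma$-compatibility of $\theta$ is exactly what forces $\mu(V)$ and $\mu(S(V))$ to have opposite signs, which is the wedge that closes the argument.
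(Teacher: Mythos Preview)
Your proof is correct and follows essentially the same route as the paper: both take the maximal destabilising subrepresentation $V$, observe that $V$ and $S(V)$ are semistable with $\mu(S(V)) < 0 < \mu(V)$, and conclude that the composition $V \to M \to S(M) \to S(V)$ vanishes, making $V$ isotropic and contradicting $\sigma$-semistability. Your slope-squeeze on the image $W$ just spells out in detail what the paper phrases as ``a map between semistable representations of strictly decreasing slope''.
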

\begin{proof}
Suppose that $M$ is $\sigma$-semistable but not semistable. Let $i:U \hookrightarrow M$ be the strongly contradicting semistability subrepresentation, that is, the subrepresentation with maximal slope and maximal dimension among such subrepresentations. Then $U$, and by Lemma \ref{lem:symmStab} also $S(U)$, is semistable with
\[
\mu(S(U)) < \mu(M) < \mu(U).
\]
This implies that the composition
\[
U \xrightarrow[]{i} M \xrightarrow[]{\psi_M} S(M) \xrightarrow[]{S(i)} S(U)
\]
vanishes, being a map between semistable representations of strictly decreasing slope. But then $U$ is isotropic, contradicting the supposed $\sigma$-semistability of $M$. The converse is immediate.
\end{proof}

From now on we will refer to $\sigma$-semistability simply as semistability.

\begin{Prop}
\label{prop:sdHNFilt}
Every self-dual representation $M$ has a unique filtration
\[
0 = U_0 \subset U_1 \subset \cdots \subset U_r \subset M
\]
by isotropic subrepresentations such that the subquotients $U_1 \slash U_0, \dots, U_r \slash U_{r-1}$ are semistable, the self-dual quotient $M /\!/ U_r$ is zero or semistable and the slopes satisfy
\[
\mu( U_1 \slash U_0) > \mu (U_2 \slash U_1) > \cdots > \mu( U_r \slash U_{r-1}) > 0.
\]
\end{Prop}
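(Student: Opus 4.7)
My plan is to derive the self-dual Harder–Narasimhan filtration directly from the ordinary one, exploiting the $\sigma$-compatibility of $\theta$ together with Lemma \ref{lem:symmStab} to give the ordinary HN filtration a symmetry property from which the isotropic initial segment can be read off.

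First I would write the ordinary HN filtration of $M$ as
\[
0 = V_0 \subset V_1 \subset \cdots \subset V_n = M,
\]
with slopes $\mu_i := \mu(V_i/V_{i-1})$ satisfying $\mu_1 > \cdots > \mu_n$. Applying the exact contravariant functor $S$ to this filtration produces the chain $W_i := \ker(S(M) \twoheadrightarrow S(V_{n-i}))$ of $S(M)$, whose subquotients are $S(V_{n-i+1}/V_{n-i})$. By Lemma \ref{lem:symmStab} these subquotients are semistable, and their slopes $-\mu_{n-i+1}$ are strictly decreasing in $i$, so $W_\bullet$ is the HN filtration of $S(M)$. The isomorphism $\psi_M: M \xrightarrow{\sim} S(M)$ carries the HN filtration of $M$ to another HN filtration of $S(M)$, so uniqueness of the ordinary HN filtration forces $\psi_M(V_i) = W_i$. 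Unwinding the definition of the orthogonal, this identity reads
\[
V_i^{\perp} = V_{n-i}, \qquad 0 \leq i \leq n,
\]
and in particular $\mu_i + \mu_{n+1-i} = 0$ for every $i$.

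With this symmetry in hand, the filtration splits into positive, zero, and negative slope parts. Let $r$ be the largest index with $\mu_r > 0$. The slope identity forces the slopes $\mu_{r+2}, \dots, \mu_n$ to be the negatives of $\mu_r, \dots, \mu_1$, and leaves at most one middle slope equal to zero; consequently either $n = 2r$ (no zero slope, and $V_r = V_r^{\perp}$) or $n = 2r+1$ (one zero slope, with $V_r \subsetneq V_r^{\perp} = V_{r+1}$). Setting $U_i := V_i$ for $0 \leq i \leq r$ produces an isotropic chain with semistable subquotients of strictly decreasing positive slopes, and the induced self-dual quotient $M /\!/ U_r$ is either $0$ or the slope-zero semistable representation $V_{r+1}/V_r$. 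This gives existence.

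For uniqueness, suppose $0 = U_0 \subset \cdots \subset U_r$ is any filtration as in the proposition. I would build an ordinary filtration
\[
U_0 \subset U_1 \subset \cdots \subset U_r \subset U_r^{\perp} \subset U_{r-1}^{\perp} \subset \cdots \subset U_0^{\perp} = M
\]
(omitting $U_r^{\perp}$ when $M /\!/ U_r = 0$). Using $U_j^{\perp}/U_{j+1}^{\perp} \cong S(U_{j+1}/U_j)$ and Lemma \ref{lem:symmStab}, every subquotient is semistable, and the hypothesis $\mu(U_i/U_{i-1}) > 0$ together with slope zero on the self-dual piece makes the sequence of slopes strictly decreasing. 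Hence this is an HN filtration of $M$, so by uniqueness it coincides with $V_\bullet$; in particular $U_i = V_i$ for $i \leq r$, as required.

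The only real subtlety I anticipate is the clean identification $V_i^{\perp} = V_{n-i}$: it is the point where the ordinary theory is leveraged, and every later step is a bookkeeping consequence of it combined with Lemma \ref{lem:symmStab} and the uniqueness of the ordinary HN filtration.
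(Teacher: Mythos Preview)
Your argument is correct, but it is genuinely different from the paper's. The paper proceeds by induction on $\dim M$: if $M$ is not semistable it takes the strongly contradicting semistability subrepresentation $U_1$, notes (from the proof of Proposition \ref{prop:sdSemiStab}) that $U_1$ is isotropic, applies the inductive hypothesis to $M/\!/ U_1$, and pulls back along $U_1^{\perp}\twoheadrightarrow M/\!/ U_1$; uniqueness comes from uniqueness of $U_1$. Your approach instead proves directly the remark the paper makes \emph{after} the proof, namely that the $\sigma$-HN filtration is the positive-slope half of the ordinary HN filtration: you establish the symmetry $V_i^{\perp}=V_{n-i}$ via Lemma \ref{lem:symmStab} and uniqueness of the ordinary HN filtration, and then read off both existence and uniqueness from that. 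The paper's argument is more self-contained (it reuses only the isotropy fact already established in Proposition \ref{prop:sdSemiStab}) and generalises cleanly to settings where one builds the filtration step by step; your argument is more global and has the advantage of making the relationship to the ordinary HN filtration transparent from the outset, at the cost of invoking the full existence/uniqueness of ordinary HN filtrations as input.
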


\begin{proof}
If $M$ is semistable, then $0 \subset M$ is the desired filtration. So assume that $M$ is not semistable and proceed by induction on $\dim \, M$. The case $\dim \, M=1$ is vacuous since $M$ is semistable. Let $U_1 \subset M$ be the (non-zero) strongly contradicting semistability subrepresentation, which is isotropic by the proof of Proposition \ref{prop:sdSemiStab}. The inductive hypothesis implies that $M /\!/ U_1$ has a unique filtration with the required properties. Pulling this back by the quotient morphism $U_1^{\perp} \twoheadrightarrow M /\!/ U_1$ gives the desired filtration of $M$. Uniqueness follows from the uniqueness of the strongly contradicting semistability subrepresentation.
\end{proof}

The filtration given in Proposition \ref{prop:sdHNFilt} is called the $\sigma$-Harder-Narasimhan (HN) of $M$. In fact, the $\sigma$-HN filtration coincides with the positive half (according to slope) of the HN filtration of $M$, viewed as an ordinary representation.

We now turn to $\sigma$-stability. Recall that a stable $k$-representation $U$ is called absolutely stable if its base change $U\otimes_k \overline{k}$ is a stable $\overline{k}$-representation. Absolutely $\sigma$-stable representations are defined analogously. A $\sigma$-stable representation is called regular if it is also stable as an ordinary representation.

\begin{Lem}
\label{lem:ramananLem}
Any $\sigma$-stable self-dual representation is isometric to an orthogonal direct sum of regular $\sigma$-stable self-dual representations.
\end{Lem}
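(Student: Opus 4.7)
The plan is to induct on $\dim M$ and at each step peel off a single regular $\sigma$-stable orthogonal summand. If $M$ is already stable as an ordinary representation, then it is itself regular $\sigma$-stable and there is nothing to prove. Otherwise, by Proposition \ref{prop:sdSemiStab}, $M$ is semistable of slope $\mu(M)=0$, and since it is not stable it contains a non-zero stable subrepresentation $i:U \hookrightarrow M$ of slope zero (for instance, a stable subobject appearing in a Jordan--H\"older filtration of $M$ in the abelian subcategory of semistable representations of slope zero).

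I next argue that $U$ cannot be isotropic: were it so, the equality $\mu(U)=0=\mu(M)$ would violate the $\sigma$-stability of $M$. By definition of the orthogonal, non-isotropy means the composition
\[
\psi_U \;:=\; S(i)\circ \psi_M \circ i \;:\; U \longrightarrow S(U)
\]
is non-zero. By Lemma \ref{lem:symmStab}, $S(U)$ is stable of slope $-\mu(U)=0$, so both $U$ and $S(U)$ are simple objects in the abelian category of semistable representations of slope zero, and hence the non-zero morphism $\psi_U$ is an isomorphism.

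I then verify that $\psi_U$ genuinely equips $U$ with the structure of a self-dual object. This is a short diagram chase: naturality of $\Theta$ gives $S^{2}(i)\,\Theta_U = \Theta_M\, i$, and combining this with the self-duality identity $S(\psi_M)\,\Theta_M = \psi_M$ for $M$ yields
\[
S(\psi_U)\,\Theta_U \;=\; S(i)\, S(\psi_M)\, S^2(i)\,\Theta_U \;=\; S(i)\, S(\psi_M)\, \Theta_M\, i \;=\; S(i)\, \psi_M\, i \;=\; \psi_U.
\]
Since $\psi_U$ is an isomorphism, the restricted bilinear form on $U$ is non-degenerate, and the standard orthogonal decomposition argument produces a splitting $M\simeq_{S} U\oplus U^{\perp}$ of self-dual representations. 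The summand $U$ is regular $\sigma$-stable: it is stable as an ordinary representation, so every proper subrepresentation has slope strictly less than $\mu(U)=0$, whence in particular every non-zero isotropic subrepresentation does.

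To close the induction I check that $U^{\perp}$, with its induced self-dual structure, is again $\sigma$-stable, so that the inductive hypothesis applies. Any isotropic subrepresentation of $U^{\perp}$ is also isotropic in $M$, and $\mu(U^{\perp})=0=\mu(M)$, so the $\sigma$-stability of $M$ directly yields that of $U^{\perp}$. Applying induction to $U^{\perp}$ decomposes it orthogonally into regular $\sigma$-stable pieces, which together with $U$ completes the decomposition of $M$. The main obstacle I anticipate is the verification that the naive restriction $\psi_U$ defines a self-dual structure on $U$ together with the Schur-type step that forces $\psi_U$ to be an isomorphism; both hinge on the simplicity of $U$ and $S(U)$ inside the subcategory of semistable representations of slope zero, which is where the hypothesis of $\sigma$-stability enters essentially.
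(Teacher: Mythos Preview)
Your proof is correct and follows essentially the same argument as the one the paper cites from \cite{ramanan1980}: pick a stable slope-zero subobject $U$, use $\sigma$-stability to rule out isotropy, invoke Schur to make $\psi_U$ an isomorphism, split off $U$ orthogonally, and induct. The paper simply invokes Ramanan's proof without writing out the details; you have supplied them in the quiver language, and every step (non-isotropy of $U$, the self-duality check for $\psi_U$, the orthogonal splitting, and the $\sigma$-stability of $U^{\perp}$) goes through as you describe.
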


\begin{proof}
The proof of \cite[Proposition 4.2, Remark 4.3(ii)]{ramanan1980} can be applied without change in the quiver setting with no restriction on the ground field.
\end{proof}

Let $M$ be a polystable $k$-representation, that is, a direct sum of stable representations of the same slope. Then $M \otimes_k \overline{k}$ is polystable and $Aut( M \otimes_k \overline{k})$ is a product of general linear groups. If $M$ admits a self-dual structure, then $Aut_S( M \otimes_k \overline{k})$ is a product general linear, symplectic and orthogonal groups; suppose there are $r$ orthogonal factors. If the field $k$ is finite, then the Galois cohomology of the isometry group is
\[
H^1(k, Aut_S( M \otimes_k \overline{k})) \simeq \mathbb{Z}_2^r,
\]
each factor being identified with the choice of discriminant $\varepsilon_i \in k^{\times} \slash k^{\times 2} \simeq \mathbb{Z}_2$ of the corresponding orthogonal form. It follows that there are $2^r$ $k$-forms of the self-dual representation $M \otimes_k \overline{k}$. The $k$-form associated to $\varepsilon \in H^1(k, Aut_S( M \otimes_k \overline{k}))$ will be denoted $M^{\varepsilon}$. See \cite[Chapter III]{serre2002} for background results on Galois cohomology.

\begin{Prop}
\label{prop:sdStab}
Assume that $k$ is finite or algebraically closed. A self-dual representation is $\sigma$-stable if and only if it is isometric to an orthogonal direct sum of the form $\bigoplus_l M_l^{\oplus m_l, \varepsilon_l}$, where $M_l$ are pairwise non-isomorphic regular $\sigma$-stable representations and $m_l=1$ or (if $k$ is finite) $m_l=2$ and $M_l^{\oplus 2, \varepsilon_l}$ is non-hyperbolic.
\end{Prop}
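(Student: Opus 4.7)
The plan is to combine Lemma \ref{lem:ramananLem} with Schur's lemma for the stable pieces. For the forward direction, Lemma \ref{lem:ramananLem} provides an orthogonal decomposition $M \simeq_S \bigoplus_i N_i$ with each $N_i$ regular $\sigma$-stable. Since each $N_i$ is stable with $S(N_i) \simeq N_i$, Schur's lemma gives $\mathrm{Hom}(N_i, S(N_j)) \simeq \mathrm{Hom}(N_i, N_j) = 0$ whenever $N_i \not\simeq N_j$ as plain representations, so $\psi_M$ is block diagonal along isomorphism classes of underlying representation. Grouping the summands yields $M \simeq_S \bigoplus_l M_l^{\oplus m_l, \varepsilon_l}$ with $M_l$ pairwise non-isomorphic regular $\sigma$-stable and $\varepsilon_l$ the Galois class of the induced self-dual structure on $M_l^{\oplus m_l}$.

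To bound the multiplicities $m_l$, I use that $M_l$ is stable of slope $0$, so slope-$0$ subrepresentations of $M_l^{\oplus m_l}$ correspond via Schur to $k_l$-subspaces of $k_l^{m_l}$, where $k_l = \mathrm{End}(M_l)$; the structure $\varepsilon_l$ induces a non-degenerate form on $k_l^{m_l}$ whose isotropic subspaces parametrize the isotropic slope-$0$ subrepresentations of $M_l^{\oplus m_l, \varepsilon_l}$. The $\sigma$-stability of $M$ forces this form to be anisotropic, which by the classical classification of (Hermitian, symmetric, or alternating) forms yields $m_l = 1$ over $\bar{k}$, while over $\mathbb{F}_q$ one has $m_l \leq 2$ with anisotropy in rank two equivalent to non-hyperbolicity.

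The converse runs backwards. Any subrepresentation $V \subset \bigoplus_l M_l^{\oplus m_l, \varepsilon_l}$ splits by Schur as $V = \bigoplus_l V_l$ with $V_l \subset M_l^{\oplus m_l}$, and isotropy of $V$ forces isotropy of each $V_l$ since distinct $l$-summands are mutually orthogonal. If some $V_l$ has slope strictly less than $0 = \mu(M)$, then $\mu(V) < \mu(M)$; otherwise each $V_l$ is isomorphic to $M_l^{\oplus j_l}$, corresponding to an isotropic $k_l$-subspace of $k_l^{m_l}$, which the hypothesis on $\varepsilon_l$ forces to vanish. Hence $V = 0$ whenever $\mu(V) = \mu(M)$, establishing $\sigma$-stability.

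The main obstacle I anticipate is the precise identification of the form induced on $k_l^{m_l}$ by $\varepsilon_l$, especially when $k_l \supsetneq k$ and the form is Hermitian rather than bilinear. One must carefully unwind the duality functor $S$, the isomorphism $\Theta$, and the Galois cohomological parametrization of self-dual $k$-forms to verify that anisotropy of this form is exactly what $\sigma$-stability detects, and to handle the various symmetry types (orthogonal, symplectic, Hermitian) uniformly.
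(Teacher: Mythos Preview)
Your proof is correct and follows the same overall strategy as the paper: invoke Lemma \ref{lem:ramananLem} to decompose $M$ into regular $\sigma$-stable pieces, group by isomorphism class, bound the multiplicities, and then run the argument backwards for the converse.

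The only real difference is in the multiplicity bound. You pass to the induced form on $k_l^{m_l}$, observe that $\sigma$-stability is equivalent to anisotropy, and then appeal to the classification of anisotropic (bilinear or Hermitian) forms over $\overline{k}$ and $\mathbb{F}_q$. The paper instead argues directly via Witt decomposition: if $m_l \geq 3$ then $M_l^{\oplus m_l, \varepsilon_l} \simeq_S H(M_l) \oplus M_l^{\oplus (m_l-2), \varepsilon_l^{\prime}}$ for some $\varepsilon_l^{\prime}$, and the hyperbolic summand contradicts $\sigma$-stability. This is of course the same fact (isotropy of forms in dimension $\geq 3$ over finite fields) packaged differently, but it has the advantage of bypassing the case analysis you flag in your last paragraph: one never needs to pin down whether the induced form is symmetric, alternating, or Hermitian, only that a hyperbolic plane splits off once the rank is at least three. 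Your worry about $k_l \supsetneq k$ is therefore legitimate but not an obstruction---the Witt-type argument handles all cases uniformly.
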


\begin{proof}
Suppose first that $k$ is finite. Let $M$ be $\sigma$-stable. Note that $M$ has no hyperbolic summands. By Lemma \ref{lem:ramananLem} there are pairwise non-isomorphic regular $\sigma$-stable representations $M_l$ such that $M= \bigoplus_l M_l^{\oplus m_l, \varepsilon_l}$. If $m_l \geq 3$ for some $l$, then there exists a discriminant $\varepsilon^{\prime}_l$ so that
\[
M_l^{\oplus m_l, \varepsilon_l} \simeq_S H(M_l) \oplus M_l^{\oplus (m_l-2), \varepsilon^{\prime}_l},
\]
contradicting $\sigma$-stability of $M$. It follows that $m_l=1$ or $m_l= 2$ and $\varepsilon_l$ is non-hyperbolic.

Conversely, consider $M= \bigoplus_l M_l^{\oplus m_l, \varepsilon_l}$ as in the statement of the proposition. A slope zero subrepresentation $U \subset M$ is necessarily a direct sum of copies of the $M_l$. The assumptions on $m_l$ and $\varepsilon_l$ imply that $M_l$ does not appear as an isotropic subrepresentation of $M_l^{\oplus m_l, \varepsilon_l}$. Hence $U$ is not isotropic and $M$ is $\sigma$-stable.

When $k=\overline{k}$ the same proof applies. In this case the isometry $M_l^{\oplus 2} \simeq_S H(M_l)$ implies that $m_l = 1$ for all $l$. 
\end{proof}

\begin{Prop}
\label{prop:sdStabAut}
Assume that $k$ is finite or algebraically closed.
\begin{enumerate}
\item A $\sigma$-stable representation $M$ is absolutely $\sigma$-stable if and only if, in the notation of Proposition \ref{prop:sdStab}, each $M_l$ is absolutely stable and $m_l=1$.

\item If $M$ is an absolutely $\sigma$-stable representation with $r$ regular summands, then $Aut_S(M) \simeq \mathbb{Z}_2^r$.
\end{enumerate}
\end{Prop}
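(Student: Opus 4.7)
My plan for both parts is to base change to $\overline{k}$ and compare the decomposition given by Proposition \ref{prop:sdStab} over $k$ with its counterpart over $\overline{k}$.

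First I consider part (1). In the direction $(\Leftarrow)$, I assume each $M_l$ is absolutely stable with $m_l = 1$. Then $M \otimes_k \overline{k} = \bigoplus_l (M_l \otimes_k \overline{k})$ is an orthogonal direct sum of pairwise non-isomorphic absolutely stable self-dual $\overline{k}$-representations, each of which is therefore regular $\sigma$-stable; Proposition \ref{prop:sdStab} applied over $\overline{k}$ then shows that $M \otimes_k \overline{k}$ is $\sigma$-stable, so $M$ is absolutely $\sigma$-stable. In the direction $(\Rightarrow)$, suppose $M$ is absolutely $\sigma$-stable. To rule out $m_l = 2$, I observe that over $\overline{k}$ the unique self-dual form on $M_l^{\oplus 2}$ is hyperbolic (the Galois cohomology of the isometry group is trivial over the algebraic closure), so $M_l^{\oplus 2, \varepsilon_l} \otimes_k \overline{k} \simeq_S H(M_l \otimes_k \overline{k})$, and this representation contains $M_l \otimes_k \overline{k}$ as an isotropic subrepresentation of slope zero, contradicting $\sigma$-stability of $M \otimes_k \overline{k}$. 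Hence $m_l = 1$ for all $l$. For the absolute stability of each $M_l$, I would compare the Proposition \ref{prop:sdStab} decomposition of $M \otimes_k \overline{k}$ over $\overline{k}$ with the base change of the $k$-level decomposition, invoking uniqueness and the pairwise distinctness of the regular $\sigma$-stable summands to force each $M_l \otimes_k \overline{k}$ to be simple.

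For part (2), I use part (1) to write $M = \bigoplus_{l=1}^r M_l$ with the $M_l$ pairwise non-isomorphic absolutely stable self-dual representations, orthogonal as summands of $M$ by construction. Since $Hom(M_l, M_{l^\prime}) = 0$ for $l \neq l^\prime$ by Schur's lemma, every $k$-linear automorphism of $M$ preserves the decomposition into the $M_l$'s, and since the summands are mutually orthogonal every self-isometry does as well, yielding $Aut_S(M) = \prod_{l=1}^r Aut_S(M_l)$. For each $M_l$, absolute stability gives $End_k(M_l) = k$, so $Aut(M_l) = k^\times$; the isometry equation $\psi_{M_l} = S(\lambda) \psi_{M_l} \lambda$ for $\lambda \in k^\times$ reduces to $\lambda^2 = 1$, whence $\lambda = \pm 1$ since $\text{char}(k) \neq 2$. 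Therefore $Aut_S(M_l) \simeq \mathbb{Z}_2$ and $Aut_S(M) \simeq \mathbb{Z}_2^r$.

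I expect the main obstacle to lie in the absolute stability step of part (1): ruling out the scenario in which a regular $\sigma$-stable summand $M_l$ over $k$ has $End_k(M_l)$ a proper field extension of $k$. The delicate point is to exploit the interaction between the Galois action permuting the simple constituents of $M_l \otimes_k \overline{k}$ and the duality functor $S$, together with the pairwise distinctness assertion in Proposition \ref{prop:sdStab}, to conclude that each such $M_l$ must in fact be absolutely stable.
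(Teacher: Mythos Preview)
Your outline matches the paper's proof closely. Part (2) and the $(\Leftarrow)$ direction of part (1) are handled exactly as in the paper, except that you do not explicitly justify why the $M_l \otimes_k \overline{k}$ remain pairwise non-isomorphic; the paper invokes Hilbert's Theorem 90 for this (the $k$-forms of an absolutely stable $\overline{k}$-representation are classified by $H^1(k,\overline{k}^{\times})=0$, so distinct $M_l$ stay distinct after base change).

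The point you flag as the main obstacle --- absolute stability of the $M_l$ in the $(\Rightarrow)$ direction --- is where you diverge from the paper. You propose to analyse the Galois action on the simple constituents of $M_l \otimes_k \overline{k}$ and its interaction with $S$. The paper does none of this. It argues directly: having written
\[
M \otimes_k \overline{k} \simeq_S \bigoplus_l (M_l\otimes_k \overline{k})^{\oplus m_l},
\]
it applies Proposition~\ref{prop:sdStab} over $\overline{k}$ to \emph{this} decomposition and reads off that $\sigma$-stability forces $m_l=1$ and each summand $M_l \otimes_k \overline{k}$ to be regular $\sigma$-stable. Since regular $\sigma$-stable over $\overline{k}$ entails stable as an ordinary $\overline{k}$-representation, this is precisely the statement that $M_l$ is absolutely stable. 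So the step you anticipate as delicate is, in the paper, a one-line consequence of identifying the displayed orthogonal decomposition with the one furnished by Proposition~\ref{prop:sdStab}.

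Your separate argument for ruling out $m_l=2$ via hyperbolicity over $\overline{k}$ is correct but redundant from the paper's point of view: once the identification above is made, $m_l=1$ falls out simultaneously with absolute stability, and no independent hyperbolic argument is needed.
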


\begin{proof}
When $k = \overline{k}$ the first statement is Proposition \ref{prop:sdStab}. So assume that $k$ is finite and, in the notation of Proposition \ref{prop:sdStab}, write a $\sigma$-stable representation as $M=\bigoplus_l M_l^{\oplus m_l, \varepsilon_l}$. Then
\[
M \otimes_k \overline{k} \simeq_S \bigoplus_l (M_l\otimes_k \overline{k})^{\oplus m_l} .
\]
By Proposition \ref{prop:sdStab} $M \otimes_k \overline{k}$ is $\sigma$-stable if and only if $m_l =1$ and the summands are pairwise non-isomorphic regular $\sigma$-stable representations. By Hilbert's Theorem 90, the summands $M_l\otimes_k \overline{k}$ are pairwise non-isomorphic if and only if the $M_l$ are. Also, $M_l\otimes_k \overline{k}$ is regular $\sigma$-stable if and only if $M_l$ is absolutely stable. This proves the first part of the proposition.

For the second statement, writing $M = \bigoplus_{l=1}^r M_l$ as above (omitting $\varepsilon_l$ from the notation), Schur's lemma gives $End(M) = \bigoplus_{l=1}^r End(M_l)$. 
Hence $Aut_S(M) =\bigoplus_{l=1}^r Aut_S(M_l)$. Since each $M_l$ is absolutely stable, $End(M_l) \simeq k$ and $Aut_S(M_l) \simeq \mathbb{Z}_2$. The statement follows.
\end{proof}

\subsection{GIT stability and moduli spaces}

The affine variety of $k$-representations of $Q$ of dimension vector $d \in \Lambda_Q^+$ is $R_d = \bigoplus_{i \xrightarrow[]{\alpha} j} Hom_k(k^{d_i}, k^{d_j})$. The algebraic $k$-group $GL_d = \prod_{i \in Q_0} GL_{d_i}$ acts by simultaneous base change on $R_d$ and its orbits are in bijection with the set of isomorphism classes of representations of dimension vector $d$.

Assume that $k$ is algebraically closed. Fix $d \in \Lambda_Q^{\sigma, +}$ and assume that $d_i$ is even if $i \in Q_0^{\sigma}$ with $s_i=-1$. Up to isometry, there is a unique self-dual structure $\langle \cdot, \cdot \rangle$ on the trivial representation of dimension vector $d$. Denote by $R_d^{\sigma} \subset R_d$ the subspace of structure maps satisfying equation \eqref{eq:strSymm} with respect to $\langle \cdot, \cdot \rangle$. Explicitly,
\[
R_d^{\sigma} \simeq \bigoplus_{i \xrightarrow[]{\alpha} j \in Q_1^+} Hom_k(k^{d_i}, k^{d_j} ) \oplus \bigoplus_{i \xrightarrow[]{\alpha} \sigma(i) \in Q_1^{\sigma}} Bil^{s_i \tau_{\alpha}}(k^{d_i}).
\]
Here $Bil^{\epsilon}(V)$ denotes the vector space of symmetric ($\epsilon=1$) or skew-symmetric ($\epsilon=-1$) bilinear forms on a vector space $V$. The isometry group of $\langle \cdot, \cdot \rangle$ is the reductive $k$-group $G_d^{\sigma} = \prod_{i \in Q_0^+} GL_{d_i}  \times \prod_{i \in Q_0^{\sigma}} G_{d_i}^{s_i}$ where $G_{d_i}^{s_i}$ is an orthogonal or symplectic group:
\[
G_{d_i}^{s_i} = \left\{ \begin{array}{ll} O_{d_i}, & \mbox{ if } s_i = 1 \\  Sp_{d_i}, & \mbox{ if } s_i = -1.
 \end{array} \right.
\]
The group $G_d^{\sigma}$ acts on $R_d^{\sigma}$ through the embedding $G_d^{\sigma} \hookrightarrow  GL_d$ given on factors by $G_{d_i}^{s_i} \hookrightarrow GL_{d_i}$ for $i \in Q_0^{\sigma}$ and
\[
GL_{d_i} \rightarrow GL_{d_i} \times GL_{d_{\sigma(i)}}, \;\;\;\;\;\;\; g_i \mapsto (g_i, (g_i^{-1})^T)
\]
for $i \in Q_0^+$. Isometry classes of self-dual representations of dimension vector $d$ are in bijection with the $G_d^{\sigma}$-orbits of $R_d^{\sigma}$.

If $k$ is finite, the bilinear form $\langle \cdot, \cdot \rangle$ need not be uniquely defined. Indeed, for each $i \in Q_0^{\sigma}$ with $s_i=1$ there are two inequivalent choices for the restriction of $\langle \cdot, \cdot \rangle$ to $M_i$, labelled by a discriminant $\varepsilon_i \in \mathbb{Z}_2$. Fixing a choice $\varepsilon$ of these discriminants, there is an associated algebraic group $G_d^{\sigma, \varepsilon}$ and a $G_d^{\sigma, \varepsilon}$-representation $R_d^{\sigma, \varepsilon}$. As a vector space $R_d^{\sigma, \varepsilon}$ is independent of $\varepsilon$. Isometry classes of self-dual representations of dimension vector $d$ are in bijection with the $G_d^{\sigma,\varepsilon}$-orbits of $R_d^{\sigma,\varepsilon}$ as $\varepsilon$ varies over all choices.

Each stability $\theta \in \Lambda_Q^{\vee}$ defines a character
\[
\chi_{\theta}: GL_d \rightarrow k^{\times}, \;\;\;\;\;\;\;
(\{g_i\}_{i\in Q_0}) \mapsto  \prod_{i \in Q_0} (\det g_i)^{-\theta_i}
\]
and by restriction also a character of $G_d^{\sigma}$. Stabilities satisfying $\sigma^* \theta = \theta$ restrict to the trivial character of the identity component of $G_d^{\sigma}$. In fact up to a factor of one half, which is irrelevant for GIT, the characters of the identity component can be identified with the $\sigma$-compatible stabilities.

Recall the definition of stability in GIT \cite{mumford1994}, \cite{king1994}. Assume that $k = \overline{k}$ and let $V$ be a representation of a (not necessarily connected) reductive group $G$ with kernel $\Delta \subset G$. Fix a character $\chi: G \rightarrow k^{\times}$.

\begin{Def}
A point $v \in V$ is $\chi$-semistable if there exists $n \geq 1$ and
\[
f \in k[V]^{G,\chi^n} = \{ h \in k[V]  \mid  h(g \cdot v^{\prime}) = \chi(g)^n h( v^{\prime} ), \; \forall g \in G, \; v^{\prime} \in V \}
\]
such that $f(v) \neq 0$. If, in addition, the stabilizer $\mbox{Stab}_{G\slash \Delta}( v)$ is finite and the action of $G$ on $\{ v^{\prime} \in V \mid f(v^{\prime}) \neq 0 \}$ is closed, then $v$ is called $\chi$-stable.
\end{Def}

The $\chi$-(semi)stable points for the action of $G$ and its identity component coincide \cite[Proposition 1.15]{mumford1994}. In particular, we can apply the usual Hilbert-Mumford criterion to test stability, regardless of the connectivity of $G$.

\begin{Thm}
\label{thm:selfDualStability}
Assume that $k = \overline{k}$ and let $\theta$ be a $\sigma$-compatible stability. A self-dual representation $M \in R_d^{\sigma}$ is $\sigma$-(semi)stable if and only if it is $\chi_{\theta}$-(semi)stable.
\end{Thm}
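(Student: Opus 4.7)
The proof strategy is the self-dual analogue of King's argument \cite{king1994}: apply the Hilbert-Mumford numerical criterion to the action of $G_d^{\sigma}$ on $R_d^{\sigma}$ with character $\chi_\theta$, establishing a correspondence between admissible one-parameter subgroups of $G_d^{\sigma}$ and isotropic filtrations of $M$.

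First I would classify the one-parameter subgroups $\lambda \colon \mathbb{G}_m \to G_d^{\sigma}$. Such a $\lambda$ gives a weight decomposition $M_i = \bigoplus_{n \in \mathbb{Z}} M_i^{(n)}$, and the condition that $\lambda(t)$ preserves $\langle \cdot, \cdot \rangle$ forces $M_i^{(n)}$ and $M_{\sigma(i)}^{(m)}$ to be orthogonal unless $n+m=0$, and to pair perfectly when $n+m=0$. The filtration $M^{\geq n} := \bigoplus_i \bigoplus_{m \geq n} M_i^{(m)}$ therefore satisfies $(M^{\geq n})^{\perp} = M^{\geq 1-n}$, so $M^{\geq n}$ is isotropic for $n \geq 1$. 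A standard computation, as in King's proof, shows that $\lim_{t \to 0} \lambda(t) \cdot M$ exists in $R_d^{\sigma}$ exactly when each $M^{\geq n}$ is preserved by the structure maps, i.e., is a subrepresentation. Non-trivial admissible 1-PSs thus correspond to isotropic filtrations $0 = U_0 \subsetneq U_1 \subsetneq \cdots \subsetneq U_r \subsetneq M$ equipped with a choice of positive integer weight jumps.

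Next I would compute the Hilbert-Mumford weight. Directly, $\langle \chi_\theta, \lambda \rangle = -\sum_{i \in Q_0} \theta_i \sum_n n \dim M_i^{(n)}$. For the basic 1-PS attached to a single isotropic $0 \neq U \subsetneq M$ --- weight $+1$ on $U$, weight $0$ on $U^{\perp}/U$, weight $-1$ on the complementary piece identified with $S(U)$ --- reindexing $i \leftrightarrow \sigma(i)$ and using $\sigma$-compatibility $\theta_{\sigma(i)} = -\theta_i$ yield $\langle \chi_\theta, \lambda \rangle = -2\theta(U)$. For a general weighted isotropic filtration the weight is a positive linear combination of the $-2\theta(U_k)$, so testing single-step filtrations is sufficient for the criterion.

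The conclusion is then immediate: since $\sigma$-compatibility together with $\sigma(\Dim M) = \Dim M$ forces $\theta(M) = 0$ and hence $\mu(M) = 0$, the Hilbert-Mumford inequality $\langle \chi_\theta, \lambda \rangle \geq 0$ (respectively, $>0$) for every non-trivial admissible $\lambda$ is equivalent to $\mu(U) \leq \mu(M)$ (respectively, $\mu(U) < \mu(M)$) for every nonzero isotropic $U \subsetneq M$, which is exactly $\sigma$-(semi)stability. For the stable case the finiteness of the stabilizer of $M$ in $G_d^{\sigma}$ modulo the kernel of the action is handled using Schur's lemma and Proposition \ref{prop:sdStabAut}. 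The main obstacle will be classifying the one-parameter subgroups of $G_d^{\sigma}$ correctly --- especially at fixed nodes $i \in Q_0^{\sigma}$, where $M_i$ carries an orthogonal or symplectic form --- and producing, from an arbitrary isotropic subrepresentation $U$, an honest splitting $M \cong U \oplus (U^{\perp}/U) \oplus S(U)$ compatible with both the self-dual form and the representation structure, so that it arises from a genuine 1-PS in $G_d^{\sigma}$.
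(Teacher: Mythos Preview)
Your approach is essentially the paper's: weight-decompose $M$ under a one-parameter subgroup $\lambda$ of $G_d^{\sigma}$, use the isometry condition to see that $M^{\geq n}$ is isotropic for $n\geq 1$, compute the Hilbert--Mumford pairing as (twice) a sum of $\theta(M^{\geq n})$ over $n>0$, and for the converse build $\lambda$ from the filtration $U\subset U^{\perp}\subset M$.

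One clarification on your ``main obstacle'': the splitting $M \cong U \oplus (U^{\perp}/U) \oplus S(U)$ need only be a vector-space splitting compatible with the bilinear form (so that the associated $\lambda$ lands in $G_d^{\sigma}$), \emph{not} a splitting of representations. The limit $\lim_{t\to 0}\lambda(t)\cdot M$ exists precisely because $U$ and $U^{\perp}$ are subrepresentations; the complements need not be. This is exactly how the paper proceeds, and it removes the difficulty you anticipate. Finally, your appeal to Proposition~\ref{prop:sdStabAut} for finiteness of the stabilizer is unnecessary: the paper simply invokes King's form of the Hilbert--Mumford criterion \cite[Proposition~2.5]{king1994}, in which the finite-stabilizer condition for stability is already encoded by the strict inequality holding for every nontrivial $\lambda$.
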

\begin{proof}
We follow the strategy of \cite[\S 3]{king1994} where the analogous statement for ordinary representations is proven. We will prove the statement for stability. The argument for semistability is the same.

Given $M \in R_d^{\sigma}$ and a cocharacter $\lambda: k^{\times} \rightarrow G^{\sigma}_d$ define
\[
M_i^a = \left\{ x \in M_i  \mid  \lambda(z) \cdot x = z^a x, \;\;\; \forall z \in k^{\times} \right\}, \;\;\; a \in \mathbb{Z}, \; i \in Q_0.
\]
For each arrow $i \xrightarrow[]{\alpha} j$ the structure map $m_{\alpha}$ decomposes into a collection of linear maps $m_{\alpha}^{a,b} : M_i^a \rightarrow M_j^b$ satisfying $\lambda(z) \cdot m_{\alpha}^{a,b} = z^{b-a} m_{\alpha}^{a,b}$. Then $\displaystyle \lim_{z \rightarrow 0} \lambda(z) \cdot M$ exists if and only if $m_{\alpha}^{a,b}=0$ for all $a > b$ and $\alpha \in Q_1$. The latter condition is equivalent to the direct sum
\[
M_{(w)} = \bigoplus_{i \in Q_0} \bigoplus_{ a \geq w} M_i^a
\]
being a subrepresentation of $M$ for each $w \in \mathbb{Z}$. Then $\{M_{(w)} \}_{w \in \mathbb{Z}}$ is a decreasing filtration of $M$ stabilizing at $0$ for $w \gg 0$ and at $M$ for $ w \ll 0$.

Let $x \in M_i^a$ and $x^{\prime} \in M_{\sigma(i)}^b$. Since $\lambda$ acts by isometries we have
\[
\langle x, x^{\prime} \rangle = \langle \lambda(z) x, \lambda(z) x^{\prime} \rangle = z^{a+b} \langle x, x^{\prime} \rangle.
\]
Therefore $\langle x, x^{\prime} \rangle = 0$ whenever $a \neq -b$, implying $M_{(w)}^{\perp} = M_{(-w+1)}$. In particular, $M_{(w)}$ is isotropic if $w >0$.

Writing $( \cdot, \cdot )$ for the canonical pairing between characters and cocharacters, we compute
\begin{eqnarray*}
( \chi_{\theta} , \lambda ) &=&  \sum_{w \in \mathbb{Z}} \theta(M_{(w)}) \\
&=& \sum_{w >0}  \theta(M_{(-w+1)}) + \sum_{w > 0 } \theta(M_{(w)}) \\ 
&=& \sum_{w >0} \left( \theta (M /\!/ M_{(w)}) + \theta(M_{(w)}) \right)+ \sum_{w > 0 } \theta(M_{(w)}) \\ 
&=& 2 \sum_{w > 0 } \theta(M_{(w)}).
\end{eqnarray*}
In the last line we used that $\theta$ vanishes on $\Lambda_Q^{\sigma}$.

If $M$ is $\sigma$-stable, from the previous calculation we see that $( \chi_{\theta} , \lambda )  < 0$ for all cocharacters $\lambda$. By the Hilbert-Mumford criterion (in the form of \cite[Proposition 2.5]{king1994}) $M$ is $\chi_{\theta}$-stable. Conversely, suppose that $M$ is $\chi_{\theta}$-stable. A non-zero isotropic subrepresentation $U \subset M$ defines a filtration
\begin{equation}
\label{eq:isoFilt}
U \subset U^{\perp} \subset M.
\end{equation}
There exists a cocharacter $\lambda: k^{\times} \rightarrow G^{\sigma}_d$ whose limit $\lim_{z \rightarrow 0} \lambda(z) \cdot M$ exists and whose associated filtration is \eqref{eq:isoFilt}; take $\lambda$ to have weight $-1$ on $U$, weight zero on a vector space complement of $U$ in $U^{\perp}$, and weight $1$ on a complement of $U^{\perp}$. The Hilbert-Mumford criterion implies $2 \theta(U)= ( \chi_{\theta},\lambda ) < 0$, proving that $M$ is $\sigma$-stable.
\end{proof}

For each $\sigma$-compatible stability $\theta$ and dimension vector $d \in \Lambda_Q^{\sigma, +}$, define the moduli space of semistable self-dual representations as the GIT quotient
\[
\mathfrak{M}^{\sigma, \theta}_d =  \mbox{Proj} \left( \bigoplus_{n \geq 0} k[R_d^{\sigma}]^{G_d^{\sigma}, \chi_{\theta}^n }\right).
\]
It is an irreducible normal quasi-projective variety parameterizing $S$-equivalence classes of semistable self-dual representations. More precisely, each semistable self-dual representation $M$ has a $\sigma$-Jordan-H\"{o}lder filtration
\[
0=U_0 \subset U_1 \cdots \subset U_r \subset M,
\]
with subquotients $U_1 \slash U_0, \dots, U_r \slash U_{r-1}$ stable of slope zero and self-dual quotient $M /\!/ U_r$ zero or $\sigma$-stable. The associated graded self-dual representation is
\[
Gr_S(M) =\bigoplus_{i=1}^r H(U_i \slash U_{i-1}) \oplus M /\!/ U_r.
\]
Two semistable self-dual representations $M_1$ and $M_2$ are $S$-equivalent if $Gr_S(M_1) \simeq_S Gr_S(M_2)$. Using this and the self-dual generalization of \cite[Theorem 2.6]{derksen2002} we conclude that the forgetful map $\mathfrak{M}_d^{\sigma,\theta} \rightarrow \mathfrak{M}_d^{\theta }$ to the moduli space of ordinary representations is injective.

There is an open subvariety $\mathfrak{M}_d^{\sigma, \theta \mhyphen st} \subset \mathfrak{M}_d^{\sigma,\theta }$ parameterizing isometry classes of $\sigma$-stable representations. While $\mathfrak{M}_d^{\sigma, \theta \mhyphen st}$ in general has orbifold singularities (see Proposition \ref{prop:sdStabAut}) the regular $\sigma$-stable representations are smooth points. If non-empty, $\mathfrak{M}_d^{\sigma,\theta \mhyphen st}$ is of dimension $-\mathcal{E}(d)$. This can be seen either by direct calculation or by identifying the tangent space of the moduli stack of self-dual representations at $M$ with $Ext^1(M,M)^S$ and the infinitesimal isometries of $M$ with $Hom(M,M)^{-S}$.

\begin{Ex}
Let $Q$ be the following orientation of the $A_{2n}$ Dynkin diagram:
\vspace{5pt}
\begin{center}
\begin{tikzpicture}[thick,scale=.4,decoration={markings,mark=at position 0.6 with {\arrow{>}}}]
\draw[loosely dotted] (-2,0) to  (0,0);
\draw[postaction={decorate}] (0,0) to  (2,0);
\draw[postaction={decorate}] (2,0) to  (4,0);
\draw[postaction={decorate}] (4,0) to  (6,0);
\draw[loosely dotted] (6,0) to  (8,0);

\fill (-2,0) circle (4pt);
\fill (0,0) circle (4pt);
\fill (2,0) circle (4pt);
\fill (4,0) circle (4pt);
\fill (6,0) circle (4pt);
\fill (8,0) circle (4pt);

\draw (-2,-0.8) node {\footnotesize $\mhyphen n$};
\draw (-.1,-0.8) node {\footnotesize $ \mhyphen 2$};
\draw (1.9,-0.8) node {\footnotesize $ \mhyphen 1$};
\draw (4,-0.8) node {\footnotesize $1$};
\draw (6,-0.8) node {\footnotesize $2$};
\draw (8,-0.8) node {\footnotesize $n$};
\end{tikzpicture}
\end{center}
The involution swaps nodes $i$ and $-i$, fixes the middle arrow and swaps the remaining arrows. If a representation is orthogonal (symplectic) then, along with other conditions, the map assigned to the middle arrow is skew-symmetric (respectively, symmetric).

For stability $\theta_i = -i$ the stable representations coincide with the indecomposable representations, which are in bijection with the positive roots of $A_{2n}$. There are no $\sigma$-stable orthogonal representations. The semistable orthogonal representations are hyperbolic sums of $\sigma$-symmetric indecomposables. The symplectic case depends on the ground field. When $k = \overline{k}$ the regular $\sigma$-stable symplectic representations are precisely the $\sigma$-symmetric indecomposables. When $k$ is finite each $\sigma$-symmetric indecomposable admits two distinct symplectic structures and this give all the regular $\sigma$-stables. There is also a unique $\sigma$-stable symplectic structure on the twofold direct sum of each $\sigma$-symmetric indecomposable. After base change to $\overline{k}$ this representation is hyperbolic and so is not absolutely $\sigma$-stable.
\end{Ex}

\begin{Ex}
Let $K_n$ be the $n$-Kronecker quiver
\[
\begin{tikzpicture}[thick,scale=.8,decoration={markings,mark=at position 0.6 with {\arrow{>}}}]
\draw[postaction={decorate}] (0,0) [out=30,in=150]  to  (2,0);
\draw[postaction={decorate}] (0,0) [out=-30,in=210]  to  (2,0);

\draw[dotted] (1,.3) to (1,-.3);

\draw (0,-.4) node {\small $-1$};
\draw (2,-.4) node {\small $1$};

\draw (1.3,0) node {\footnotesize $ \times n$};

\fill (0,0) circle (2pt);
\fill (2,0) circle (2pt);
\end{tikzpicture}
\]
with the involution that swaps the nodes and fixes the arrows. Symplectic representations have symmetric structure maps. Fix the stability $\theta_i=-i$ and identify $G_d^{\sigma}$ with $GL_{d_1}$. A symplectic representation of dimension vector $(1,1)$ is semistable if and only if it is $\sigma$-stable if and only if not all of its structure maps are zero. Hence $\mathfrak{M}^{\mathfrak{sp},\theta }_{(1,1)} \simeq \mathbb{P}^{n-1}$, arising as the coarse moduli space of a $\mathbb{Z}_2$-gerbe over $\mathbb{P}^{n-1}$. For $n>2$, $\mathfrak{M}_{(2,2)}^{\mathfrak{sp}, \theta }$ is in general singular.

Moduli spaces of $K_2$-representations can be described explicitly. For each $d \geq 1$, using Jordan-H\"{o}lder filtrations and taking symmetric products gives isomorphisms
\begin{equation}
\label{eq:symmPower}
\mathfrak{M}^{\mathfrak{sp},\theta }_{(d,d)} \simeq \Sym^d \, \mathfrak{M}^{\mathfrak{sp},\theta }_{(1,1)} \simeq \mathbb{P}^d.
\end{equation}
From Proposition \ref{prop:sdStab} $\mathfrak{M}^{\mathfrak{sp},\theta \mhyphen st}_{(d,d)}$ is the complement of the big diagonal in \eqref{eq:symmPower}. This contrasts the situation for ordinary representations, where $\mathfrak{M}_{(d,d)}^{\theta \mhyphen st}$ is empty if $d >1$.
\end{Ex}

\section{Orientifold Donaldson-Thomas theory of a quiver}

In this section we introduce the orientifold Donaldson-Thomas series of a quiver with duality structure. We use Hall algebras and their representations to study the basic properties of these series, including their wall-crossing.

\subsection{Quantum torus and coistropics}

Fix a finite field $k = \mathbb{F}_q$ of odd characteristic and let $Q$ be a quiver. The quantum torus $\hat{\mathbb{T}}_Q$ attached to $Rep_{\mathbb{F}_q}(Q)$ is the $\mathbb{Q}(q^{1/2})$-vector space with topological basis $\{ x_d \}_{d \in \Lambda_Q^+}$ and multiplication
\begin{equation}
\label{eq:quantumTorus}
x_d \cdot x_{d^{\prime}} = q^{\frac{1}{2} \langle d, d^{\prime} \rangle} x_{d+d^{\prime}}.
\end{equation}
The algebra $\hat{\mathbb{T}}_Q$, or rather the algebra generated by $\{x_d\}_{d \in \Lambda_Q}$ with the same multiplication, is a quantization of the Poisson algebra of regular functions on the algebraic torus $T_Q=\Lambda_Q^{\vee} \otimes_{\mathbb{Z}} \mathbb{C}^{\times}$ with Poisson structure determined by the skew-symmetrized Euler form $\langle \cdot, \cdot \rangle$ \cite{kontsevich2008}.

In the self-dual setting, let $\hat{\mathbb{S}}_Q$ be the $\mathbb{Q}(q^{1/2})$-vector space with topological basis $\{ \xi_e \}_{e \in \Lambda_Q^{\sigma,+}}$. Define an action of $\hat{\mathbb{T}}_Q$ on $\hat{\mathbb{S}}_Q$ by the formula
\begin{equation}
\label{eq:quantumModule}
x_d \star \xi_e = q^{\frac{1}{2} ( \langle d, e \rangle + \tilde{\mathcal{E}}(e) ) } \xi_{H(d) + e}.
\end{equation}
Using the identity
\[
\mathcal{E}(d+d^{\prime}) = \mathcal{E}(d) + \mathcal{E}(d^{\prime}) + \chi( \sigma(d), d^{\prime} ).
\]
it is verified that this gives $\hat{\mathbb{S}}_Q$ the structure of a $\hat{\mathbb{T}}_Q$-module.

The module $\hat{\mathbb{S}}_Q$ has the following geometric interpretation.\footnote{Again, for this interpretation we should use the module generated by $\{\xi_e\}_{e \in \Lambda_Q^{\sigma}}$.} The involution $\sigma : \Lambda_Q \rightarrow \Lambda_Q$ induces an anti-Poisson involution $\sigma^* :T_Q \rightarrow T_Q$ whose fixed point locus is a coisotropic subtorus $T_Q^{\sigma} \subset T_Q$. The algebra of regular functions on $T_Q^{\sigma}$, and more generally the space of sections of a vector bundle over $T_Q^{\sigma}$, inherits the structure of a $\mathbb{C}[T_Q]$-module. From this point of view, $\hat{\mathbb{S}}_Q$ is a quantization of the $\mathbb{C}[T_Q]$-module of sections of the trivial vector bundle of rank $2^{\# Q_0^{\sigma}}$ over $T_Q^{\sigma}$.

\subsection{Hall algebras, Hall modules and integration maps}

Let $\mathcal{H}_Q$ be the Hall algebra of $Rep_{\mathbb{F}_q}(Q)$ \cite{ringel1990}. Its underlying $\mathbb{Q}$-vector space is generated by symbols $[U]$ indexed by isomorphism classes of $\mathbb{F}_q$-representations of $Q$ and its multiplication is
\[
[U] \cdot [V] = \sum_X F_{U,V}^X [X]
\]
with structure constants the Hall numbers
\[
F^X_{U,V} =\#  \{ \tilde{U} \subset X  \mid  \tilde{U} \simeq U,\;\; X \slash \tilde{U} \simeq V \}.
\]
Then $\mathcal{H}_Q$ is a $\Lambda_Q^+$-graded associated algebra.

In \cite[Lemma 6.1]{reineke2003} (see also \cite[Proposition 1]{mozgovoy2013}) Reineke showed that the map
\[
\int_{\mathcal{H}}: \mathcal{H}_Q \rightarrow  \hat{\mathbb{T}}_Q, \;\;\;\;\;\;\;\;\;\;  \left[ U \right] \mapsto  \frac{q^{\frac{1}{2} \chi(\Dim\, U, \Dim\, U)}}{\# Aut(U) } x_{\Dim \, U}
\]
is a $\mathbb{Q}$-algebra homomorphism. The map $\int_{\mathcal{H}}$ is a one dimensional version of the  (partially conjectural) integration maps central to the motivic DT theory of three dimensional Calabi-Yau categories \cite{kontsevich2008}.

We want to construct a lift of the homomorphism $\int_{\mathcal{H}}$ to the self-dual setting. To do this, we first recall the definition of the Hall module $\mathcal{M}_Q$ associated to the category $Rep_{\mathbb{F}_q}(Q)$ with fixed duality structure \cite{mbyoung2012}. It is the $\mathbb{Q}$-vector space generated by symbols $[M]$ indexed by isometry classes of self-dual $\mathbb{F}_q$-representations of $Q$. The $\mathcal{H}_Q$-module structure on $\mathcal{M}_Q$ is defined by
\[
[U] \star [M] = \sum_N G^N_{U,M}  [N]
\]
with structure constants self-dual versions of Hall numbers,
\[
G^N_{U,M}= \# \{ \tilde{U} \subset N \mid \tilde{U} \simeq U, \;\; \tilde{U} \mbox{ is isotropic}, \; \; N /\!/ \tilde{U} \simeq_S M \}.
\]

The next result provides the desired lift of $\int_{\mathcal{H}}$.

\begin{Thm}
\label{thm:sdIntMap}
The map 
\[
\int_{\mathcal{M}}: \mathcal{M}_Q \rightarrow  \hat{\mathbb{S}}_Q \;\;\;\;\;\;\;\;\;\; \left[ M \right]  \mapsto \displaystyle \frac{q^{\frac{1}{2} \mathcal{E}(\Dim M)}}{\# Aut_S(M) } \xi_{\Dim M}
\]
is a $\displaystyle \int_{\mathcal{H}}$-morphism. More precisely, the diagram
\[
\begin{tikzpicture}[every node/.style={on grid},  baseline=(current bounding box.center),node distance=2]
  \node (A) {$\mathcal{H}_Q \otimes_{\mathbb{Q}} \mathcal{M}_Q$};
  \node (B) [right=of A] {$\mathcal{M}_Q$};
  \node (C) [below=of A] {$\hat{\mathbb{T}}_Q \otimes_{\mathbb{Q}(q^{\frac{1}{2}})} \hat{\mathbb{S}}_Q$};
  \node (D) [right=of C] {$\hat{\mathbb{S}}_Q$};
  \draw[->] (A)--  (B);
  \draw[->] (A)-- node [left] {$\displaystyle \int_{\mathcal{H}} \otimes \int_{\mathcal{M}}$ } (C);
  \draw[->] (B)-- node [right] {$\displaystyle \int_{\mathcal{M}}$ }(D);
  \draw[->] (C)-- (D);
\end{tikzpicture}
\]
commutes, where the horizontal maps are the module structure maps.
\end{Thm}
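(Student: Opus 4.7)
The plan is to verify the commutativity of the diagram directly, mirroring the strategy used by Reineke to show $\int_{\mathcal{H}}$ is an algebra map. First, I would observe that both sides are scalar multiples of the single basis vector $\xi_{H(d) + e}$, where $d = \Dim U$ and $e = \Dim M$. For the right hand side this is immediate from the defining formula \eqref{eq:quantumModule}. For the left hand side, $G^N_{U, M}$ vanishes unless $N$ admits an isotropic subrepresentation $\tilde U \simeq U$ with $N /\!/ \tilde U \simeq_S M$; the induced ordinary filtration $0 \subset \tilde U \subset \tilde U^\perp \subset N$ has successive quotients $U$, $M$, $S(U)$, forcing $\Dim N = d + e + \sigma(d) = H(d) + e$. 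The theorem thus reduces to matching the scalar coefficients of $\xi_{H(d) + e}$.

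The key input is then a self-dual analogue of Riedtmann's orbit formula, which I would extract from \cite{mbyoung2012}. Interpreting the sum
\[
\sum_{[N] : \Dim N = H(d) + e} \frac{G^N_{U, M} \cdot \#Aut(U) \cdot \#Aut_S(M)}{\#Aut_S(N)}
\]
as the groupoid cardinality of isotropic extensions of $M$ by $U$, one can identify it with
\[
\frac{|Ext^1(M, U)| \cdot |Ext^1(S(U), U)^S|}{|Hom(M, U)| \cdot |Hom(S(U), U)^{-S}|}.
\]
The two factors correspond respectively to the ordinary extension $0 \to U \to \tilde U^\perp \to M \to 0$ and to the symmetric extension of $S(U)$ by $\tilde U^\perp$ controlling the self-dual thickening of $\tilde U^\perp$ to $N$. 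The symmetry constraint is precisely the condition that the isometry on $N$ restrict to the given one on $M$, while the stabilizer of such an extension is governed by $Hom(S(U), U)^{-S}$, the infinitesimal isometries preserving the data.

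Once this cardinality is in hand, I would convert all cardinalities to powers of $q$ via $|V| = q^{\dim V}$ and reduce to a purely bilinear identity in $\Lambda_Q$. This is verified using the expansion $\mathcal{E}(H(d) + e) = \mathcal{E}(H(d)) + \mathcal{E}(e) + \chi(H(d), e)$, the iterate $\mathcal{E}(H(d)) = \mathcal{E}(d) + \mathcal{E}(\sigma(d)) + \chi(d, d)$, the definition $\mathcal{E}(d) = \dim Hom(S(U), U)^{-S} - \dim Ext^1(S(U), U)^S$, the ordinary Euler identity $\dim Hom(M, U) - \dim Ext^1(M, U) = \chi(e, d)$, and the duality-invariance $\chi(a, b) = \chi(\sigma(b), \sigma(a))$ of the Euler form. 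The main obstacle is the counting step above: formulating and proving the self-dual Riedtmann formula. The subtlety is that isotropic extensions of $M$ by $U$ are parametrized by a $\mathbb{Z}_2$-equivariant extension datum whose symmetric and antisymmetric pieces under $(S, \Theta)$ must be carefully isolated, and the stacky quotient by $Aut(U) \times Aut_S(M)$ must be handled with care. Once this combinatorial heart is in place, the final exponent-matching step reduces to bookkeeping.
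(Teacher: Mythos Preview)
Your proposal is correct and follows essentially the same route as the paper: reduce to a scalar identity in $\xi_{H(d)+e}$, invoke the self-dual Riedtmann-type counting identity from \cite{mbyoung2012}, and then match exponents. The paper's own proof is terser, simply noting that the required identity $\sum_N \frac{G^N_{U,M}}{\#Aut_S(N)} = \frac{q^{-\chi(e,d)-\mathcal{E}(d)}}{\#Aut(U)\cdot\#Aut_S(M)}$ follows from \cite[Lemma~2.2, Theorem~2.9]{mbyoung2012}, which is exactly your self-dual orbit formula rewritten in terms of $q$-powers; your sketch of the groupoid interpretation and the exponent bookkeeping via $\mathcal{E}(d+d')=\mathcal{E}(d)+\mathcal{E}(d')+\chi(\sigma(d),d')$ spells out what the paper leaves implicit.
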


\begin{proof}
By linearity it suffices to show that 
\[
\int_{\mathcal{M}} ([U] \star [M])=\left( \int_{\mathcal{H}}[U] \right) \star \left(\int_{\mathcal{M}}[M] \right)
\]
for all representations $U$ and self-dual representations $M$. A direct calculation shows that this is equivalent to the identity
\[
\sum_N \frac{G^N_{U,M}}{\# Aut_S(N) } = \frac{ q^{-\chi( \Dim \, M, \Dim \, U ) - \mathcal{E}(\Dim \, U)} }{ \# Aut (U) \cdot \# Aut_S (M) }.
\]
Using \cite[Lemma 2.2]{mbyoung2012} this is in turn equivalent to the identity proven in \cite[Theorem 2.9]{mbyoung2012}.
\end{proof}

Write $\hat{\mathcal{H}}_Q$ for the completion of $\mathcal{H}_Q$ with respect to its $\Lambda_Q^+$-grading and $\hat{\mathcal{M}}_Q$ for the corresponding completion of $\mathcal{M}_Q$. Both integration maps $\int_{\mathcal{H}}$ and $\int_{\mathcal{M}}$ and Theorem \ref{thm:sdIntMap} extend to these completions.

\begin{Rem}
Theorem \ref{thm:sdIntMap} holds more generally if $Rep_{\mathbb{F}_q} (Q)$ is replaced with an exact subcategory of a hereditary finitary abelian category. The duality need only be defined on the exact subcategory. An example of this type is the category of vector bundles over a smooth projective curve over $\mathbb{F}_q$ with its standard duality.
\end{Rem}

\subsection{Orientifold DT series}

Define characteristic functions of (self-dual) representations of a fixed dimension vector $d$ in $\Lambda_Q^+$ or $\Lambda_Q^{\sigma,+}$ by
\[
\mathbf{1}_d = \sum_{\Dim \, U = d} [U] \in \mathcal{H}_Q, \;\;\;\;\;\; \;\; \mathbf{1}_d^{\sigma} = \sum_{\Dim \, M = d} [M] \in \mathcal{M}_Q.
\]
The sums run over the finitely many isomorphism (isometry) classes of (self-dual) representations of dimension vector $d$. Given a ($\sigma$-compatible) stability $\theta$, there are similarly defined characteristic functions of semistable representations with fixed dimension vector $d$ or slope $\mu$:
\[
\mathbf{1}_d^{\theta } \in \mathcal{H}_Q,  \;\;\;\;  \mathbf{1}_{\mu}^{\theta } \in \hat{\mathcal{H}}_Q, \;\;\;\;\;\;\;\; \mathbf{1}_d^{\sigma, \theta }  \in \mathcal{M}_Q, \;\;\;\; \mathbf{1}^{\sigma, \theta } \in  \hat{\mathcal{M}}_Q.
\]
As self-dual representations have zero slope we have written $\mathbf{1}^{\sigma, \theta }$ for $\mathbf{1}^{\sigma, \theta }_{\mu=0}$. Applying the appropriate integration map to each characteristic function gives a stack generating function, denoted by $A$ with the corresponding sub/superscripts. For example, 
\begin{equation}
\label{eq:ssStackGenFun}
A_d^{\sigma, \theta }= \int_{\mathcal{M}} \mathbf{1}^{\sigma, \theta }_d = \sum_{\substack{\Dim \, M = d \\ M \mbox{ is s.s}}} \frac{q^{\frac{1}{2}\mathcal{E}(d)}}{\# Aut_S(M) } \xi_d \in \hat{\mathbb{S}}_Q.
\end{equation}
In analogy with \cite{kontsevich2011} we call
\[
A^{\sigma, \theta } = \sum_{d \in \Lambda_Q^{\sigma, +}} A_d^{\sigma, \theta} \in \hat{\mathbb{S}}_Q
\]
the orientifold Donaldson-Thomas series of $(Q, \sigma)$ with its given duality structure and stability.

Let $n \geq 0$ and put $(y)_n = \prod_{i=1}^n (1-y^i)$. For each $d \in \Lambda_Q^+$ and $e \in \Lambda_Q^{\sigma, +}$ define
\[
(y)_d = \prod_{i \in Q_0} (y)_{d_i},  \;\;\;\;\;\;\;\;\;\;\;
(y)^{\sigma}_e = \prod_{i \in Q_0^{\sigma}} (y^2)_{\lfloor \frac{e_i}{2} \rfloor} \times \prod_{i \in Q_0^+} (y)_{e_i}
\]
where $\lfloor \frac{e_i}{2} \rfloor$ is the greatest integer less than or equal to $\frac{e_i}{2}$.

\begin{Prop}
\label{prop:stackGenFun}
Fix $d \in \Lambda_Q^+$, $e \in \Lambda_Q^{\sigma, +}$ and let $\theta$ be a $\sigma$-compatible stability.
\begin{enumerate}
\item The following identities hold:
\[
A_d = \frac{q^{-\frac{1}{2}\chi( d, d )}}{(q^{-1})_d} x_d, \;\;\;\;\;\; \;\;\;\;\;\;\;
A_e^{\sigma} =  \frac{q^{-\frac{1}{2}\mathcal{E}(e)}}{(q^{-1})^{\sigma}_e} \xi_e.
\]
\item The quantity $A_e^{\sigma, \theta }$ is equal to $q^{\frac{1}{2} \mathcal{E}(e)}$ times the number of $\mathbb{F}_q$-points of the stack of semistable self-dual representations of dimension vector $e$:
\[
A_e^{\sigma, \theta } = q^{\frac{1}{2} \mathcal{E}(e)} \cdot \# [R_e^{\sigma , \varepsilon, \theta \mhyphen ss } \slash G_e^{\sigma, \varepsilon}](\mathbb{F}_q) \xi_e.
\]
\end{enumerate}
\end{Prop}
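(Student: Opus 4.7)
The plan is to compute both generating functions by evaluating the integration maps directly on the characteristic functions and translating the resulting mass sums $\sum 1/\#\mathrm{Aut}_S(M)$ into $\mathbb{F}_q$-point counts of quotient stacks via the standard groupoid cardinality dictionary.

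For the first identity in part~(1), I would begin with
\[
A_d = \int_{\mathcal{H}} \mathbf{1}_d = q^{\frac{1}{2}\chi(d,d)}\Bigl(\sum_{[U]\colon \Dim U = d}\frac{1}{\#\mathrm{Aut}(U)}\Bigr) x_d
\]
and apply the standard identity $\sum_{[U]} 1/\#\mathrm{Aut}(U) = \#R_d(\mathbb{F}_q)/\#GL_d(\mathbb{F}_q)$, valid because $GL_d$-orbits on the affine space $R_d$ parameterize isomorphism classes. Plugging in $\#R_d(\mathbb{F}_q) = q^{\sum_{\alpha\colon i\to j} d_i d_j}$ and the classical formula $\#GL_d(\mathbb{F}_q) = q^{\sum_i d_i^2}(q^{-1})_d$, together with $\chi(d,d) = \sum_i d_i^2 - \sum_{\alpha\colon i\to j} d_i d_j$, rearranges to the claimed expression.

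For $A_e^{\sigma}$, I would argue similarly but partition the isometry classes of self-dual representations by their discriminant class $\varepsilon \in H^1(\mathbb{F}_q, G_e^{\sigma})$, so that
\[
\sum_{[M]\colon \Dim M = e}\frac{1}{\#\mathrm{Aut}_S(M)} = \sum_{\varepsilon}\frac{\#R_e^{\sigma,\varepsilon}(\mathbb{F}_q)}{\#G_e^{\sigma,\varepsilon}(\mathbb{F}_q)}.
\]
Since $R_e^{\sigma,\varepsilon}$ is independent of $\varepsilon$ as a vector space, the content is the sum $\sum_{\varepsilon} 1/\#G_e^{\sigma,\varepsilon}(\mathbb{F}_q)$: at each symplectic factor this is the reciprocal order of $Sp_{2m}(\mathbb{F}_q)$, while at each even-dimensional orthogonal factor with $e_i = 2m$ the elementary identity $\tfrac{1}{|O_{2m}^+(\mathbb{F}_q)|} + \tfrac{1}{|O_{2m}^-(\mathbb{F}_q)|}$ telescopes to a single $(q^{-2})_m^{-1}$-type factor, and the odd-dimensional case is direct. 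Matching the resulting $q$-powers against the explicit formula \eqref{eq:modEuler} for $\mathcal{E}(e)$ produces the stated identity.

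Part~(2) is a specialization of the same argument. By Proposition~\ref{prop:sdSemiStab} the $\sigma$-semistable locus coincides with the semistable locus of the underlying representation, so $R_e^{\sigma,\varepsilon,\theta\mhyphen ss} \subset R_e^{\sigma,\varepsilon}$ is an open $G_e^{\sigma,\varepsilon}$-invariant subvariety. Restricting the mass sum to semistable $M$ then yields $\sum_{\varepsilon} \#R_e^{\sigma,\varepsilon,\theta\mhyphen ss}(\mathbb{F}_q)/\#G_e^{\sigma,\varepsilon}(\mathbb{F}_q)$, which is the groupoid cardinality of the quotient stack appearing in the statement (the displayed notation implicitly absorbing the sum over inner forms $\varepsilon$). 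The principal technical obstacle will be the bookkeeping for $A_e^{\sigma}$ in part~(1): verifying that summing reciprocal orthogonal group orders over discriminants collapses cleanly to the product $(q^{-1})_e^{\sigma}$, with $q$-powers matching $\mathcal{E}(e)/2$ exactly via \eqref{eq:modEuler}. Once this identification is in place, part~(2) follows immediately.
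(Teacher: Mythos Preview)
Your proposal is correct and follows essentially the same approach as the paper: both compute $A_e^{\sigma}$ by summing $\#R_e^{\sigma,\varepsilon}/\#G_e^{\sigma,\varepsilon}$ over discriminant classes $\varepsilon$, use the explicit orders of the finite orthogonal and symplectic groups to collapse the sum into $(q^{-1})_e^{\sigma}$, and match the $q$-power against the decomposition of $\mathcal{E}(e)$ from equation~\eqref{eq:modEuler}. The one point the paper spells out more carefully than you do is part~(2): rather than saying the sum over inner forms is ``implicitly absorbed'', the paper identifies the objects of the groupoid $[R_e^{\sigma,\varepsilon,\theta\mhyphen ss}/G_e^{\sigma,\varepsilon}](\mathbb{F}_q)$ with pairs in $R_e^{\sigma,\theta\mhyphen ss}(\mathbb{F}_q)\times H^1(\mathbb{F}_q,G_e^{\sigma,\varepsilon}(\overline{\mathbb{F}}_q))$, thereby explaining why the single stack (for one fixed $\varepsilon$) already sees all discriminant classes $\varepsilon'$ through its $\mathbb{F}_q$-torsors.
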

\begin{proof}
The identity for $A_d$ is known  \cite{mozgovoy2013}. In the self-dual case we have
\[
A_e^{\sigma} =\sum_{\varepsilon}  q^{\frac{1}{2} \mathcal{E}(e)} \frac{\# R_e^{\sigma, \varepsilon} }{\#G_e^{\sigma, \varepsilon} } \xi_e.
\]
Denote the function on $\Lambda_Q$ given by the first two sums (last two sums) in equation \eqref{eq:modEuler} by $\mathcal{E}_0$ (respectively, $\mathcal{E}_1$). By direct inspection $\#  R_{e}^{\sigma,\varepsilon} = q^{-\mathcal{E}_1(e)}$. If $i \in Q_0^+$ then $G_{e_i}^{\sigma, \varepsilon} = GL_{e_i}$ and
\[
\frac{1}{\# G^{\sigma,\varepsilon}_{e_i} } = \frac{q^{-\mathcal{E}_0(e_i (i+ \sigma(i)))}}{(q^{-1})_{e_i}}.
\]
If $i \in Q_0^{\sigma}$ then $G_{e_i}^{\sigma, \varepsilon_i}$ is an orthogonal or symplectic group. Using the identities
\[
\# O_{2n}^{\varepsilon_i}(\mathbb{F}_q)  = \frac{2  \# GL_n (\mathbb{F}_{q^2})}{q^n+ \varepsilon_i},
\;\;\;\;\;
\# Sp_{2n}(\mathbb{F}_q)  = \frac{1}{2} \# O_{2n+1}^{\varepsilon_i}(\mathbb{F}_q) = q^n  \# GL_n (\mathbb{F}_{q^2} ),
\]
with $\varepsilon_i \in \{-1,1\}$, we find
\[
\sum_{\varepsilon_i} \frac{1}{\# G^{\sigma,\varepsilon_i}_{e_i} } = \frac{q^{-\mathcal{E}_0(e_i i)}}{(q^{-2})_{\lfloor \frac{e_i}{2} \rfloor}}.
\]
These calculations together with Burnside's lemma give the identity for $A_e^{\sigma}$.

Turning to the second part of the proposition, the number of $\mathbb{F}_q$-points of the stack $[R_e^{\sigma , \varepsilon, \theta  \mhyphen ss} \slash G_e^{\sigma, \varepsilon}]$ is by definition
\[
\# [R_e^{\sigma , \varepsilon, \theta  \mhyphen ss} \slash G_e^{\sigma, \varepsilon}](\mathbb{F}_q)= \sum_{ \eta \in Iso [R_e^{\sigma, \varepsilon, \theta  \mhyphen ss} \slash G_e^{\sigma, \varepsilon}](\mathbb{F}_q)} \frac{1}{\# Aut(\eta)}.
\]
The objects of the groupoid $[R_e^{\sigma, \varepsilon,\theta  \mhyphen ss} \slash G_e^{\sigma, \varepsilon}](\mathbb{F}_q)$ are in bijection with the set
\[
R_e^{\sigma, \theta  \mhyphen ss}(\mathbb{F}_q) \times H^1(\mathbb{F}_q, G_e^{\sigma, \varepsilon}(\overline{\mathbb{F}}_q)).
\]
The cohomology $H^1(\mathbb{F}_q, G_e^{\sigma, \varepsilon}(\overline{\mathbb{F}}_q))$ is identified with the set of inequivalent choices of $\varepsilon^{\prime}$ with $\varepsilon$ as the base point. Morphisms in the groupoid are the transporter groups
\[
Hom_{[R\slash G^{\sigma}](\mathbb{F}_q)} \left( (r^{\prime}, \varepsilon^{\prime}) , (r^{\prime \prime}, \varepsilon^{\prime\prime}) \right) = \delta_{\varepsilon^{\prime},\varepsilon^{\prime\prime}} \mbox{Trans}_{G_e^{\sigma, \varepsilon^{\prime}}(\mathbb{F}_q)}(r^{\prime}, r^{\prime \prime}).
\]
Hence the automorphisms of $(r^{\prime}, \varepsilon^{\prime})$ are the stabilizers of $r^{\prime} \in R_e^{\sigma, \varepsilon^{\prime},\theta  \mhyphen ss}(\mathbb{F}_q)$ under the action of $G_e^{\sigma, \varepsilon^{\prime}}(\mathbb{F}_q)$, or in other words, $Aut_S(r^{\prime})$. The proposition follows after using equation \eqref{eq:ssStackGenFun}.
\end{proof}

Our next goal is to describe the characteristic function $\mathbf{1}_d^{\sigma, \theta }$ for a given $\sigma$-compatible stability $\theta$. As iterated products in the Hall algebra count filtrations of representations, the existence of unique HN filtrations implies the following identity in $\mathcal{H}_Q$ (see \cite{reineke2003}):
\begin{equation}
\label{eq:HNRec}
\mathbf{1}_d = \sum_{d^{\bullet}} \mathbf{1}_{d^1}^{\theta } \cdots \mathbf{1}_{d^n}^{\theta }.
\end{equation}
The sum is over all $n \geq 1$ and $d^{\bullet}=(d^1, \dots, d^n) \in (\Lambda_Q^+)^n$ of weight $d=\sum_{i=1}^n d^i$ whose slopes are strictly decreasing, $\mu(d^1) > \cdots > \mu (d^n)$. Equation \eqref{eq:HNRec} gives a recursion for $\mathbf{1}_d^{\theta }$ in terms of $\mathbf{1}_{d^{\prime}}$ with $\dim d^{\prime} \leq \dim d$. This recursion was solved by Reineke \cite[Theorem 5.1]{reineke2003}.

Using Proposition \ref{prop:sdHNFilt}, similar reasoning gives an identity in $\mathcal{M}_Q$: 
\begin{equation}
\label{eq:sigmaHNRec}
\mathbf{1}^{\sigma}_d = \sum_{(d^{\bullet}; d^{\infty})} \mathbf{1}_{d^1}^{\theta } \cdots \mathbf{1}_{d^n}^{\theta } \star \mathbf{1}^{\sigma,\theta }_{d^{\infty}}.
\end{equation}
The sum is now over all $n \geq 0$ and $(d^{\bullet}; d^{\infty})=(d^1, \dots, d^n; d^{\infty}) \in (\Lambda_Q^+)^n \times \Lambda_Q^{\sigma,+}$ of $\sigma$-weight $d=\sum_{i=1}^n H(d^i) + d^{\infty}$ whose slopes are strictly decreasing. Note that $d^{\infty}$ may be zero but that $d^i \neq 0$ if $i \neq 0$. We write $l(d^{\bullet}) = n$ if $d^{\bullet} \in (\Lambda_Q^+)^n$.

\begin{Def}[\textit{cf.} {\cite[Definition 5.2]{reineke2003}}]
Let $(d^{\bullet}; d^{\infty}) \in (\Lambda_Q^+)^n \times \Lambda_Q^{\sigma, +}$.
\begin{enumerate}[leftmargin=0cm,itemindent=.6cm,labelwidth=\itemindent,labelsep=0cm,align=left]
\item  For a possibly empty subset $I= \{ s_1 < \cdots < s_k \} \subset \{1, \dots, n\}$, the $I$-coarsening of $(d^{\bullet}; d^{\infty})$ is
\[
c_I(d^{\bullet}; d^{\infty})= (d^1+ \cdots + d^{s_1},  \cdots, d^{s_{k-1}+1} + \cdots + d^{s_k}; H(d^{s_k+1} + \cdots + d^n) + d^{\infty}).
\]

\item The $I$-coarsening $c_I(d^{\bullet}; d^{\infty})$ is called $\sigma$-admissible if
\begin{enumerate}[leftmargin=0.5cm,itemindent=.6cm,labelwidth=\itemindent,labelsep=0cm,align=left]
\item its components have strictly decreasing slope,
\item for each $i =1, \dots, k$ and $j^{\prime} =s_{i-1}+1, \dots, s_i-1$ the inequality 
\[
\mu(\sum_{j=s_{i-1} + 1}^{j^{\prime}} d^j) > \mu(\sum_{j=s_{i-1} + 1}^{s_i} d^j)
\]
holds, and
\item for each $j^{\prime} =s_k+1, \dots, n-1$ the inequality $\mu(\sum_{j=s_k + 1}^{j^{\prime}} d^j) > 0$ holds.
\end{enumerate}
\end{enumerate}
\end{Def}

We now solve the recursion \eqref{eq:sigmaHNRec} for $\mathbf{1}_d^{\sigma,\theta }$.

\begin{Thm}
\label{thm:recReso}
For each $d \in \Lambda_Q^{\sigma, +}$, equation \eqref{eq:sigmaHNRec} is solved by
\[
\mathbf{1}_d^{\sigma, \theta } = \sum_{(d^{\bullet}; d^{\infty})} (-1)^n  \mathbf{1}_{d^1} \cdots \mathbf{1}_{d^n} \star \mathbf{1}^{\sigma}_{d^{\infty}}
\]
where the sum is over all $n \geq 0$ and $(d^{\bullet}; d^{\infty}) \in (\Lambda_Q^+)^n \times \Lambda_Q^{\sigma, +}$ which are equal to $(\varnothing; d)$ or satisfy $\mu (\sum_{i=1}^j d^i) >0$ for $j=1, \dots, n$ and have $\sigma$-weight $d$.
\end{Thm}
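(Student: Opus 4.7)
The plan is to verify the claimed formula by substituting it into the recursion \eqref{eq:sigmaHNRec}, following the template of Reineke's proof of \cite[Theorem 5.1]{reineke2003}. First I would simultaneously expand each $\mathbf{1}_{d^i}^{\theta}$ appearing in \eqref{eq:sigmaHNRec} via Reineke's inversion formula for the ordinary Hall algebra, and expand $\mathbf{1}_{d^{\infty}}^{\sigma,\theta}$ using the candidate formula from the theorem. This yields a doubly indexed sum whose generic summand has the form $\mathbf{1}_{e^1} \cdots \mathbf{1}_{e^N} \star \mathbf{1}_{e^{\infty}}^{\sigma}$, weighted by a product of signs coming from both expansions.

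Next I would reorganize by the concatenated refined tuple $(e^{\bullet}; e^{\infty}) \in (\Lambda_Q^+)^N \times \Lambda_Q^{\sigma,+}$, noting that each time such a tuple arises it is determined by a unique subset $I \subset \{1, \ldots, N\}$ recording the boundaries of the outer HN-type decomposition. The three layers of inequalities --- strict decrease of the outer HN slopes, Reineke's interior partial-sum inequalities within each $d^i$, and the positivity-of-partial-slope condition from the candidate formula --- should combine into the single statement that the $I$-coarsening $c_I(e^{\bullet}; e^{\infty})$ is $\sigma$-admissible, with combined sign equal to $(-1)^{|I|}$ up to a global factor. After this identification, the theorem reduces to the purely combinatorial identity
\[
\sum_{I \,:\, c_I(e^{\bullet}; e^{\infty})\text{ is $\sigma$-admissible}} (-1)^{|I|} \;=\; \delta_{(e^{\bullet}; e^{\infty}),\,(\varnothing; d)}.
\]

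To establish the vanishing for $(e^{\bullet}; e^{\infty}) \neq (\varnothing; d)$ I would construct a sign-reversing involution on the set of $\sigma$-admissible coarsenings, mimicking the involution in \cite[Theorem 5.1]{reineke2003}: toggle membership in $I$ of the smallest index whose toggle preserves $\sigma$-admissibility. The main obstacle will be bookkeeping at the junction between the ordinary block (governed by conditions (a) and (b)) and the self-dual tail (governed by (c)): one has to show that the positivity constraint $\mu(\sum_{j = s_k+1}^{j'} d^j) > 0$ interacts with the toggle in the same way as the interior inequalities in (b), so that a toggleable index is always available when the tuple is non-trivial and the involution is well-defined and fixed-point-free. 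Once this involution is in place, the identity above follows, and consequently the candidate formula satisfies \eqref{eq:sigmaHNRec}; uniqueness of the solution (the recursion is triangular with respect to $\dim d$) then forces it to equal $\mathbf{1}_d^{\sigma,\theta}$.
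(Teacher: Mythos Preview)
Your plan is correct and matches the paper's proof almost step for step: substitute Reineke's inversion for the $\mathbf{1}_{d^i}^{\theta}$ and the candidate formula for $\mathbf{1}_{d^{\infty}}^{\sigma,\theta}$, reindex by the concatenated tuple $(e^{\bullet};e^{\infty})$, recognise the combined constraints as $\sigma$-admissibility of the coarsening, and reduce to the signed sum over $\sigma$-admissible coarsenings being $\delta_{(e^{\bullet};e^{\infty}),(\varnothing;d)}$. The paper records the sign as $(-1)^{l(d^{\bullet})}$, which is your $(-1)^{|I|}$, with the global factor $(-1)^{l(e^{\bullet})}$ pulled out exactly as you anticipate.

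The only deviation is in how the combinatorial identity is established. The paper does not build a sign-reversing involution; instead it argues by induction on $l(e^{\bullet})$, checking $l(e^{\bullet})=0,1$ by hand and for $l(e^{\bullet})\geq 2$ invoking the case split $\mu(e^1)<\mu(e^2)$ versus $\mu(e^1)\geq \mu(e^2)$ from Reineke's Lemma~5.4. That case analysis is morally the same as your toggle (it pairs coarsenings according to whether the index $1$ lies in $I$), and it handles the junction with the self-dual tail simply because once $l(e^{\bullet})\geq 2$ the relevant manipulation happens entirely at the front of the tuple, away from the tail. If you pursue the involution route, the cleanest thing is to toggle at index $1$ rather than search for the smallest toggleable index; the base case $l(e^{\bullet})=1$ then needs to be checked separately, exactly as the paper does.
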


\begin{proof}
Using the resolution of the HN recursion \eqref{eq:HNRec} from \cite{reineke2003} and substituting the claimed expression for $\mathbf{1}_d^{\sigma,\theta }$ into equation \eqref{eq:sigmaHNRec} gives for $\mathbf{1}_d^{\sigma}$ the expression
\begin{equation}
\label{eq:recurExpress}\sum_{(d^{\bullet}; d^{\infty})}  \sum_{(d^{1, \bullet}, \cdots , d^{n, \bullet} ;  d^{\infty, \bullet})} (-1)^{ \sum_{i=1}^n (l_i -1) + l_{\infty}}  \left( \prod_{i=1}^{\substack{\longrightarrow \\ \infty}} \prod_{j=1}^{\substack{\longrightarrow \\ l_i}} \mathbf{1}_{d^{i,j}} \right) \star \mathbf{1}_{d^{\infty, \infty}}^{\sigma}.
\end{equation}
The outer sum is as in equation \eqref{eq:sigmaHNRec} while the inner sum is over all $(d^{1, \bullet}, \cdots , d^{n, \bullet} ;  d^{\infty, \bullet})$ with $d^{k, \bullet} \in (\Lambda_Q^+)^{l_k}$ of weight $d^k$ satisfying
\[
\mu (\sum_{i=1}^l d^{k, i}) > \mu(d^k), \;\;\;\;\;\; l = 1, \dots, l_k -1
\]
and $d^{\infty, \bullet} \in (\Lambda_Q^+)^{l_{\infty}} \times \Lambda_Q^{\sigma,+}$ of $\sigma$-weight $d^{\infty}$ satisfying
\[
\mu (\sum_{i=1}^{l} d^{\infty,i}) >0, \;\;\;\;\;\; l=1, \dots, l_{\infty}.
\]

Let $(e^{\bullet}; e^{\infty})$ be the concatenation of $d^{1, \bullet}, \dots, d^{n,\bullet}$ and $d^{\infty, \bullet}$,
\[
(e^{\bullet}; e^{\infty})= \left(  d^{1,1}, \dots, d^{n, l_n}, d^{\infty, 1}, \dots, d^{\infty, l_{\infty}} ; d^{\infty, \infty} \right).
\]
Then $(d^{\bullet}; d^{\infty})$ is a $\sigma$-admissible coarsening of $(e^{\bullet}; e^{\infty})$. Since
\[
\sum_{i=1}^n (l_i -1) + l_{\infty} = l(e^{\bullet}) - l( d^{\bullet})
\]
the order of summation in \eqref{eq:recurExpress} can be swapped to give
\[
\mathbf{1}^{\sigma}_d = \sum_{(e^{\bullet}; e^{\infty})} (-1)^{l(e^{\bullet}) }  \sum_{(d^{\bullet}, d^{\infty})} (-1)^{l(d^{\bullet})} \mathbf{1}_{e^1} \cdots \mathbf{1}_{e^{l(e^{\bullet})}}  \star \mathbf{1}_{e^{\infty}}^{\sigma}.
\]
The range of the outer sum is as in the statement of the theorem while the inner sum is over all $\sigma$-admissible coarsenings of $(e^{\bullet}; e^{\infty})$.

To complete the proof it suffices to show that for fixed $(e^{\bullet}; e^{\infty})$ equal to $(\varnothing; d)$ or satisfying the inequality in the statement of the theorem we have
\begin{equation}
\label{eq:imptIden}
\sum_{(d^{\bullet}; d^{\infty})} (-1)^{l(d^{\bullet})} = \left\{ \begin{array}{cl}  1, & \mbox{ if } (e^{\bullet}; e^{\infty}) = (\varnothing; d)  \\  0, & \mbox{ otherwise} \\\end{array} \right. ,
\end{equation}
the sum being over all $\sigma$-admissible coarsenings of $(e^{\bullet}; e^{\infty})$. This is a self-dual analogue of \cite[Lemma 5.4]{reineke2003}. If $l(e^{\bullet}) =0$, then $(e^{\bullet}; e^{\infty}) = (\varnothing; d)$ and equation \eqref{eq:imptIden} is trivially true. For $l(e^{\bullet}) \geq 1$ we proceed by induction. If $l(e^{\bullet}) =1$, then $(e^{\bullet}; e^{\infty}) = (e^1; e)$ with $\mu(e^1) > 0$. This has $\sigma$-admissible coarsenings $I=\varnothing$ and $I = \{ 1\}$ and equation \eqref{eq:imptIden} again holds. For $l(e^{\bullet}) \geq 2$ we can follow the proof of \cite[Lemma 5.4]{reineke2003}, distinguishing the cases $\mu(e^1) < \mu (e^2)$ and $\mu(e^1) \geq \mu(e^2)$. This allows to complete the induction step.
\end{proof}

For $d^{\bullet} \in (\Lambda_Q^+)^n $ and $e \in \Lambda_Q^{\sigma, +}$ introduce the notation
\[
\chi( d^{\bullet} ) = \sum_{1 \leq i < j \leq n} \chi( d^j , d^i ), \;\;\;\;\chi( e, d^{\bullet} ) = \sum_{i=1}^n \chi ( e, d^i ), \;\;\;\;\mathcal{E}(d^{\bullet}) = \mathcal{E}( \sum_{i =1 }^n d^i ).
\]
By applying the Hall module integration map to the expression for $\mathbf{1}_d^{\sigma, \theta }$ from Theorem \ref{thm:recReso} and then using Theorem \ref{thm:sdIntMap} and Proposition \ref{prop:stackGenFun} we obtain the following result.

\begin{Thm}
\label{thm:countInv}
For any $d \in \Lambda_Q^{\sigma,+}$ and $\sigma$-compatible stability $\theta$, the coefficient of $\xi_d$ in $A_d^{\sigma,\theta }$ is equal to
\[
q^{\frac{1}{2} \mathcal{E}(d)} \sum_{(d^{\bullet}; d^{\infty})} (-1)^{l(d^{\bullet})} q^{-\chi( d^{\bullet} ) - \chi( d^{\infty}, d^{\bullet} ) - \mathcal{E}(d^{\bullet})}  \left( \prod_{i=1}^{l(d^{\bullet})}  \frac{q^{-\chi( d^i , d^i )}}{(q^{-1})_{d^i}}  \right)  \frac{q^{-\mathcal{E}(d^{\infty})}}{(q^{-1})^{\sigma}_{d^{\infty}}},
\]
where the range of summation is as in Theorem \ref{thm:recReso}.

In particular, there exists a rational function $a_d^{\sigma,\theta } (t) \in \mathbb{Z} (t^{\frac{1}{2}})$ that specializes to $A_d^{\sigma, \theta }(\mathbb{F}_q)$ at every odd prime power $q$.
\end{Thm}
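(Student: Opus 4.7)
\bigskip

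\noindent\textbf{Proof plan.} The strategy is to apply the Hall module integration map $\int_{\mathcal{M}}$ to the explicit formula for $\mathbf{1}_d^{\sigma,\theta}$ provided by Theorem \ref{thm:recReso}, and then to reduce the resulting expression to a product of explicitly known generating functions via the $\int_{\mathcal{H}}$-module property of $\int_{\mathcal{M}}$.

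First, I would apply $\int_{\mathcal{M}}$ termwise to
\[
\mathbf{1}_d^{\sigma,\theta} = \sum_{(d^{\bullet};d^{\infty})} (-1)^n \, \mathbf{1}_{d^1} \cdots \mathbf{1}_{d^n} \star \mathbf{1}_{d^{\infty}}^{\sigma}.
\]
By the commuting square of Theorem \ref{thm:sdIntMap}, each summand maps to $A_{d^1} \cdots A_{d^n} \star A_{d^{\infty}}^{\sigma}$, computed in $\hat{\mathbb{T}}_Q$ and $\hat{\mathbb{S}}_Q$. The closed-form expressions for $A_{d^i}$ and $A_{d^{\infty}}^{\sigma}$ are supplied by Proposition \ref{prop:stackGenFun}(1), so each summand becomes a scalar multiple of $x_{d^1} \cdots x_{d^n} \star \xi_{d^{\infty}}$.

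Next, I would iterate the quantum torus multiplication \eqref{eq:quantumTorus} to obtain
\[
x_{d^1} \cdots x_{d^n} = q^{\frac{1}{2} \sum_{i<j}\langle d^i,d^j\rangle} x_{d^1+\cdots+d^n},
\]
and then apply the action \eqref{eq:quantumModule} of $x_{d^1+\cdots+d^n}$ on $\xi_{d^{\infty}}$. Since $d^{\infty}\in\Lambda_Q^{\sigma,+}$ we have $\tilde{\mathcal{E}}(d^{\infty})=0$, so the total exponent picked up is $\frac{1}{2}\sum_{i<j}\langle d^i,d^j\rangle + \frac{1}{2}\langle d^1+\cdots+d^n, d^{\infty}\rangle$. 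The resulting element is supported on $\xi_{H(d^1+\cdots+d^n)+d^{\infty}} = \xi_d$, confirming that each term contributes to the coefficient of $\xi_d$.

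The main piece of work is bookkeeping of the $q$-exponents: I need to show that, after using $\langle a,b\rangle = \chi(a,b)-\chi(b,a)$ and combining the exponent above with the prefactors $q^{-\frac{1}{2}\chi(d^i,d^i)}$ from each $A_{d^i}$ and $q^{-\frac{1}{2}\mathcal{E}(d^{\infty})}$ from $A_{d^{\infty}}^{\sigma}$, the total $q$-power simplifies to $\frac{1}{2}\mathcal{E}(d) - \chi(d^{\bullet}) - \chi(d^{\infty},d^{\bullet}) - \mathcal{E}(d^{\bullet}) - \sum_i \chi(d^i,d^i) - \mathcal{E}(d^{\infty})$ as stated. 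The key identity needed is the recursive formula $\mathcal{E}(a+b) = \mathcal{E}(a) + \mathcal{E}(b) + \chi(\sigma(a),b)$, applied to $d = H(d^1+\cdots+d^n) + d^{\infty}$, together with the basic expansion $\chi(e,e) = \sum_i \chi(d^i,d^i) + \sum_{i\neq j}\chi(d^i,d^j)$ for $e=\sum d^i$, and the identity $\sigma(H(c))=H(c)$. This is the main technical obstacle, but it is a routine (if tedious) manipulation of bilinear forms.

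Finally, the rationality statement follows immediately: the formula just derived expresses the coefficient of $\xi_d$ as a finite sum (the set of decompositions $(d^{\bullet};d^{\infty})$ of $d$ is finite) of rational functions of $q^{1/2}$ with integer coefficients, since each factor $(q^{-1})_{d^i}$ and $(q^{-1})_{d^{\infty}}^{\sigma}$ is a polynomial in $q^{-1}$ and the exponents appearing in numerators are half-integers. Replacing $q$ by an indeterminate $t$ produces the desired $a_d^{\sigma,\theta}(t) \in \mathbb{Z}(t^{1/2})$, and by construction it specializes to $A_d^{\sigma,\theta}(\mathbb{F}_q)$, interpreted via Proposition \ref{prop:stackGenFun}(2), at every odd prime power $q$.
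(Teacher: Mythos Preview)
Your proposal is correct and follows essentially the same approach as the paper: apply $\int_{\mathcal{M}}$ to the expression for $\mathbf{1}_d^{\sigma,\theta}$ from Theorem \ref{thm:recReso}, invoke the $\int_{\mathcal{H}}$-morphism property of Theorem \ref{thm:sdIntMap}, and substitute the closed forms from Proposition \ref{prop:stackGenFun}. The paper states this in a single sentence, while you spell out the bookkeeping of the $q$-exponents; your identification of the identity $\mathcal{E}(a+b)=\mathcal{E}(a)+\mathcal{E}(b)+\chi(\sigma(a),b)$ as the key tool for that simplification is exactly right.
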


For an acyclic quiver $Q$ and a sufficiently generic stability $\theta$, the ordinary moduli space $\mathfrak{M}_d^{\theta }$, over $k=\mathbb{C}$ say, is a smooth projective variety. In  \cite{reineke2003} the Weil conjectures are used to show that the function $a_d^{\theta }$, specializing to $A_d^{\theta}$ at each prime power, satisfies
\[
a_d^{\theta }(v^2) = v^{\chi(d,d)} (v^2-1)^{-1}  P_{\mathfrak{M}_d^{\theta }}(v),
\]
giving an effective way to the compute the Poincar\'{e} polynomial $P_{\mathfrak{M}_d^{\theta }}(v)$. In the self-dual case, the requirement that $\theta$ be $\sigma$-compatible means that it cannot be chosen generically, except in some low dimensional examples. This leads to the existence of strictly semistable self-dual representations so that $a_d^{\sigma,\theta }$ is not obviously related to the Poincar\'{e} polynomial of $\mathfrak{M}_d^{\sigma,\theta }$. Instead, $a_d^{\sigma,\theta }$ can be interpreted as the Poincar\'{e} series of the moduli stack $[R_d^{\sigma,\theta } \slash G_d^{\sigma}]$. For similar interpretations in the case of $G$-bundles over curves and ordinary quiver representations see \cite{atiyah1983}, \cite{laumon1996} and \cite{harada2011} respectively.

The functions $a_d^{\theta }$ also have string theoretic importance, regardless of whether $\theta$ is generic or not. In \cite{manschot2013} it was proposed that the functions $a_d^{\theta }$ determine the Higgs branch expression for the index of multi-centred BPS black holes in $\mathcal{N}=2$ supergravity. Using the explicit computation of $a_d^{\theta }$ from \cite{reineke2003} this proposal was tested in a number of examples. It would be interesting to test a similar relationship between $a_d^{\sigma,\theta }$ and a Coulomb branch formula for indices of BPS black holes in the presence of an orientifold \cite{denef2010}.

\begin{Ex}
For the $n$-Kronecker quiver with stability $\theta_i=-i$ we have
\[
a_{(1,1)}^{\mathfrak{sp}, \theta } (t) =  t^{\frac{1}{2}\mathcal{E}(1,1)} \frac{t^n-1}{t-1} = t^{\frac{1-n}{2}} [n]_t.
\]
Indeed, there are $2 [n]_q$ isometry classes of semistable symplectic $\mathbb{F}_q$-representations of dimension vector $(1,1)$, each having isometry group $\mathbb{Z}_2$. These representations are absolutely $\sigma$-stable. The moduli space is $\mathfrak{M}^{\mathfrak{sp}, \theta } _{(1,1)} \simeq \mathbb{P}^{n-1}$ and we recover its Poincar\'{e} polynomial via
\[
a_{(1,1)}^{\mathfrak{sp}, \theta }(v^2) = v^{\mathcal{E}(1,1)} P_{\mathfrak{M}^{\mathfrak{sp}, \theta } _{(1,1)}}(v).
\]

In general $a_{(d,d)}^{\mathfrak{sp}, \theta }$ is rational, even after multiplication by $t^{-\frac{1}{2} \mathcal{E}(d,d)}$. For example, using Theorem \ref{thm:countInv} we compute
\[
a_{(2,2)}^{\mathfrak{sp}, \theta }(t)= t^{\frac{1}{2} \mathcal{E}(2,2)} \frac{t^{n-1}[2n]_t - [n]_t}{t+1}.
\]
When $n=2$ this is $a_{(2,2)}^{\mathfrak{sp}, \theta } (t) = t^{\frac{1}{2} \mathcal{E}(2,2)}(t^3 + t -1)$ while after multiplication by $t^{-\frac{1}{2} \mathcal{E}(2,2)}$ is polynomial but fails to recover the Poincar\'{e} polynomial of $\mathfrak{M}_{(2,2)}^{\mathfrak{sp}, \theta } \simeq \mathbb{P}^2$ because of strictly semistable symplectic representations.
\end{Ex}

We now describe a class of quivers with involution whose ordinary and self-dual representation theories differ rather mildly. This is partially motivated by \cite[\S 5.2.1]{denef2011}. Let $Q$ be an acyclic quiver and let $Q^{\sqcup}$ be the disjoint union of $Q$ with its opposite $Q^{op}$. Then $Q^{\sqcup}$ has a canonical involution $\sigma$ that swaps the nodes and arrows of $Q$ and $Q^{op}$. Let $Q^{\prime}$ be a quiver obtained from $Q^{\sqcup}$ by adjoining arrows from $Q^{op}$ to $Q$ in such a way that $\sigma$ can be extended to $Q^{\prime}$. The $\sigma$-compatible stabilities of $Q^{\prime}$ are of the form $\theta^{\prime} = \theta - \sigma^*\theta$ with $\theta \in \Lambda_Q^{\vee}$. Given $d \in \Lambda_Q^+$ pick stabilities $\theta_0, \theta_ - \in \Lambda_Q^{\vee}$ satisfying $\theta_0(d) =0$ and $\theta_-(d) < 0$. Assume that $\theta_0$ and $\theta_-$ are generic in the sense that all semistable representations of $Q$ of dimension vector $d$ are stable.

Fix a duality structure on $Q^{\prime}$. Any self-dual representation of dimension vector $d^{\prime}=H(d)$ can be written uniquely as a Lagrangian extension
\begin{equation}
\label{eq:lagFilt}
0 \rightarrow U \rightarrow N \rightarrow S(U) \rightarrow 0
\end{equation}
with $U$ a representation of $Q$ of dimension vector $d$.

The representation $N$ is $\theta^{\prime}_0$-semistable if and only if $U$ is $\theta_0$-semistable. In this case the $\sigma$-Jordan-H\"{o}lder filtration of $N$ coincides with the Jordan-H\"{o}lder filtration of $U$. This implies that the map
\[
\mathfrak{M}^{\sigma, \theta^{\prime}_0 }_{d^{\prime}}(Q^{\prime})  \rightarrow \mathfrak{M}^{\theta_0 }_d (Q), \;\;\;\;\; N \mapsto U
\]
is an isomorphism. It is straightforward to verify that the Lagrangian extensions \eqref{eq:lagFilt} are parameterized by the vector space $Ext^1(S(U),U)^S$. Since $Hom(S(U),U)$ is trivial, we have $\dim Ext^1(S(U),U)^S =-\mathcal{E}(d)$. Using this we compute
\[
a_{d^{\prime}}^{\sigma, \theta_0^{\prime} } (v^2)= v^{\mathcal{E}(d^{\prime})}\frac{v^{- 2\mathcal{E}(d)}}{v^2-1} P_{\mathfrak{M}^{\sigma, \theta^{\prime}_0 }_{d^{\prime}}(Q^{\prime})}(v).
\]

On the other hand, in some examples $\mathfrak{M}_{d^{\prime}}^{\sigma, \theta^{\prime}_-  }(Q^{\prime})$ is a fibration over $\mathfrak{M}^{\sigma, \theta^{\prime}_0 }_{d^{\prime}}(Q^{\prime})$ with fibres weighted projective spaces of dimension $-\mathcal{E}(d) -1$.

\begin{Ex}
As an example of the previous discussion, let $Q$ be the $n$-Kronecker quiver on nodes $\{-2,-1\}$ and let $Q^{\prime}$ be the quiver
\begin{center}
\begin{tikzpicture}[thick,scale=1,decoration={markings,mark=at position 0.6 with {\arrow{>}}}]
\draw[postaction={decorate}] (0,0) [out=30,in=150]  to  (2,0);
\draw[postaction={decorate}] (0,0) [out=-30,in=210]  to  (2,0);
\draw[postaction={decorate}] (4,0) [out=150,in=30]  to  (2,0);
\draw[postaction={decorate}] (4,0) [out=210,in=-30]  to  (2,0);
\draw[postaction={decorate}] (4,0) [out=30,in=150]  to  (6,0);
\draw[postaction={decorate}] (4,0) [out=-30,in=210]  to  (6,0);

\draw[dotted] (1,.3) to (1,-.3);
\draw[dotted] (3,.3) to (3,-.3);
\draw[dotted] (5,.3) to (5,-.3);

\draw (0,-.4) node {-2};
\draw (2,-.4) node {-1};
\draw (4,-.4) node {1};
\draw (6,-.4) node {2};

\draw (1.3,0) node {\footnotesize $ \times n$};
\draw (3.3,0) node {\footnotesize $ \times m$};
\draw (5.3,0) node {\footnotesize $ \times n$};

\fill (0,0) circle (2pt);
\fill (2,0) circle (2pt);
\fill (4,0) circle (2pt);
\fill (6,0) circle (2pt);
\end{tikzpicture}
\end{center}
A symplectic representation of $Q^{\prime}$ of dimension vector $d^{\prime}=(d_2, d_1, d_1, d_2)$ is a tuple
\[
(A,B)  \in Hom( k^{d_2}, k^{d_1})^{\oplus n} \oplus (\mbox{Sym}^2 k^{d_1})^{\oplus m}.
\]

Suppose that $d_1=1$. For stability $\theta_0=( d_2 ,-1)$ the representation $(A,B)$ is semistable if and only if $A \neq 0$. From the discussion above
\[
\mathfrak{M}_{d^{\prime}}^{\mathfrak{sp}, \theta^{\prime}_0 }(Q^{\prime}) \simeq \mathfrak{M}_{(d_2,1)}^{\theta_0 }(Q) \simeq Gr(d_2, \mathbb{C}^n)
\]
and
\[
a^{\mathfrak{sp}, \theta^{\prime}_0 }_{d^{\prime}}(v^2) = v^{\mathcal{E}(d^{\prime})} \frac{v^{2m} }{v^2-1}\left[ \begin{matrix} n \\ d_2 \end{matrix} \right]_v.
\]
For $\theta_-=(d_2+1, -1)$ the representation $(A,B)$ is semistable if and only if neither $A$ nor $B$ is zero. Then $\mathfrak{M}_{d^{\prime}}^{\mathfrak{sp}, \theta^{\prime}_-  }(Q^{\prime})$ is a $\mathbb{P}^{m-1}$-fibration  over $\mathfrak{M}_{d^{\prime}}^{\mathfrak{sp}, \theta^{\prime}_0 }(Q^{\prime})$ and
\[
a^{\mathfrak{sp}, \theta_-^{\prime} }_{d^{\prime}} (v^2)= v^{\mathcal{E}(d^{\prime})} [m]_v \left[ \begin{matrix} n \\ d_2 \end{matrix} \right]_v = v^{\mathcal{E}(d^{\prime})} P_{\mathfrak{M}_{d^{\prime}}^{\mathfrak{sp}, \theta^{\prime}_-  }(Q^{\prime})}(v).
\]
Note that the computation of $a^{\mathfrak{sp}, \theta_-^{\prime} }_{d^{\prime}}$ takes into account the non-trivial $\mathbb{Z}_2$-gerbe structure of the fibres.
\end{Ex}

\subsection{Wall-crossing of orientifold DT invariants}

We begin this section by describing the expected wall-crossing behaviour of counts of $\sigma$-stable self-dual objects in $Rep_{\mathbb{C}}(Q)$. Na\"{i}vely, these numbers are the orientifold DT invariants. A more precise approach to wall-crossing is described below.

If $\theta$ is generic in the sense that all semistable representations of dimension vector $d$ are stable, then $\mathfrak{M}_d^{\theta }$ is smooth and the numerical DT invariant is the topological Euler characteristic
\[
\Omega^{\theta}_d= (-1)^{\dim  \mathfrak{M}^{\theta }_d} \chi(\mathfrak{M}_d^{\theta }) \in \mathbb{Z}.
\]
The definition of $\Omega^{\theta}_d$ for general $\theta$ and $d$ is more involved; see \cite{joyce2012}, \cite{kontsevich2008}, \cite{kontsevich2011} and equation \eqref{eq:dtFactorization} below. Under similar generic conditions we define the numerical orientifold DT invariant by
\[
\Omega_e^{\sigma, \theta} = (-1)^{\dim  \mathfrak{M}^{\sigma, \theta }_e} \chi(\mathfrak{M}_e^{\sigma, \theta }).
\]
By convention we set $\Omega_0^{\sigma, \theta} =1$ for all $\theta$.

To study the $\theta$ dependence of $\Omega^{\sigma, \theta}$ fix an object $U$ and a self-dual object $M$ with $\Dim\, U = d$ and $\Dim \, M = e$. Assume that $d, \sigma(d)$ and $e$ are distinct and primitive. Let $\theta_-$, $ \theta_0$ and $\theta_+$ be nearby $\sigma$-compatible stabilities such that $U$ and $M$ are stable with respect to all three and
\[
\mu_{\theta_-} (U) <0, \;\;\;\;\; \mu_{\theta_0}(U) =0 , \;\;\;\;\; \mu_{\theta_+}(U) > 0.
\]

For stability $\theta_-$ we can obtain new $\sigma$-stable self-dual representations from $U$ and $M$ through non-trivial self-dual extensions of the form
\[
0 \rightarrow U \rightarrow N \dashrightarrow M \rightarrow 0,
\]
presenting $M$ as a quotient of $U^{\perp} \subset N$ by $U$. On the other hand, for stability $\theta_+$ the representation $N$ is destabilized by $U$ whereas non-trivial self-dual extensions of $M$ by $S(U)$ may now be $\sigma$-stable. Since $U$, $S(U)$ and $M$ are stable and pairwise non-isomorphic, Schur's lemma implies that there are no non-zero morphisms between them. In this case the self-dual extensions can be decomposed as
\[
Ext^1_{s.d.}(M,U) \simeq Ext^1 (M,U) \times Ext^1(S(U),U)^S
\]
and
\[
Ext^1_{s.d.}(M,S(U)) \simeq Ext^1 (M,S(U)) \times Ext^1(U,S(U))^S.
\]
Roughly,  the first factor in $Ext^1_{s.d.}(M,U)$ describes the extension class of $U^{\perp}$ while the second factor describes the self-dual representation $N$ as an extension of $S(U)$ by $U^{\perp}$. See \cite[\S 2.3]{mbyoung2012} for details.

If indeed all non-trivial self-dual extensions described above are $\sigma$-stable, in passing from stability $\theta_-$ to $\theta_+$ we therefore expect to gain\footnote{These are weighted projective spaces: the natural action of $\mathbb{C}^{\times}$ on $Ext^1_{s.d.}(M,S(U))$ has weight one on $Ext^1 (M,S(U))$ and weight two on $Ext^1(U,S(U))^S$. Since the topological Euler characteristics is not affected by the weighting it suffices to think of them as ordinary projective spaces.} $\mathbb{P} Ext^1_{s.d.}(M,S(U))$ and lose $\mathbb{P} Ext_{s.d.}^1(M,U)$ worth of $\sigma$-stable representations of dimension vector $H(d) + e$, leading to a change in $\chi(\mathfrak{M}_{H(d)+e}^{\sigma})$ of
\[
\chi (\mathbb{P} Ext_{s.d.}^1(M,S(U)) ) - \chi(\mathbb{P} Ext_{s.d.}^1(M,U)) = 
\langle M, U \rangle + \tilde{\mathcal{E}}(U).
\]
Let $\mathcal{I}: \Lambda_Q \times \Lambda_Q^{\sigma} \rightarrow \mathbb{Z}$ be the function defined by the expression on the right hand side of this equation. As $U$ and $M$ vary over their respective moduli spaces the total change in $\Omega_{H(d) + e}^{\sigma}$ is
\begin{equation}
\label{eq:oriWCF}
\Delta \Omega^{\sigma, \theta_- \rightarrow \theta_+}_{H(d) + e} = (-1)^{\mathcal{I}(d,e) -1}  \mathcal{I}(d,e)   \Omega^{\theta_0}_d \Omega^{\sigma, \theta_0}_e.
\end{equation}
Note that this equation is already non-trivial in the Lagrangian case, where $e =0$. In this specialization the above argument can be made more precise by a slight modification of \cite[\S 4.3]{stoppa2011}.

Equation \eqref{eq:oriWCF} is an orientifold modification of the primitive wall-crossing formula for BPS indices in four dimensional theories with $\mathcal{N}=2$ supersymmetry \cite{denef2011}. A physical derivation of equation \eqref{eq:oriWCF} was given in the setting of four dimensional $\mathcal{N}=2$ supergravity in an orientifold background \cite{denef2010}. Physically, the function $\mathcal{I}$ is a parity twisted Witten index \cite{brunner2004}, counting orientifold invariant open string states between an arbitrary $D$-brane configuration of charge $d \in \Lambda_Q$, its orientifold image and an orientifold invariant $D$-brane configuration of charge $e \in \Lambda_Q^{\sigma}$. The charge of the orientifold plane is implicit in $\mathcal{I}$.

\begin{Ex}
Consider again the quiver $Q^{\prime}$, a modification of $Q^{\sqcup}$. When $\mathfrak{M}_{d^{\prime}}^{\sigma , \theta^{\prime}_- }(Q^{\prime})$ is indeed a weighted $\mathbb{P}^{-\mathcal{E}(d)-1}$-bundle over $\mathfrak{M}^{\theta_0 }_d(Q )$ we have
\[
\chi( \mathfrak{M}_{d^{\prime}}^{\sigma, \theta^{\prime}_- }(Q^{\prime}) )= - \mathcal{E}(d)  \cdot  \chi ( \mathfrak{M}^{\theta_0 }_d(Q )).
\]
As $\mathfrak{M}_{d^{\prime}}^{\sigma, \theta^{\prime}_+ }(Q^{\prime})$ is empty (the representation $U$ in \eqref{eq:lagFilt} destabilizes $N$), this agrees with the Lagrangian specialization of equation \eqref{eq:oriWCF}.
\end{Ex}

We now return to the finite field setting and discuss a more rigorous approach to wall-crossing. We first prove a general wall-crossing formula for orientifold DT series and then specialize it to the finite type case where it becomes much more explicit and can be compared with equation \eqref{eq:oriWCF}.

\begin{Thm}
\label{thm:sdWallCross}
For any two $\sigma$-compatible stabilities $\theta$ and $\theta^{\prime}$, the identity
\[
\overset{\longleftarrow}{\prod_{\mu\in \mathbb{Q}_{> 0} }} A_{\mu}^{\theta } \star A^{\sigma, \theta } = \overset{\longleftarrow}{\prod_{\mu \in \mathbb{Q}_{> 0} }} A_{\mu}^{\theta^{\prime} } \star A^{\sigma, \theta^{\prime} }
\]
holds in $\hat{\mathbb{S}}_Q$.
\end{Thm}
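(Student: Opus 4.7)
The plan is to lift the identity to a categorical statement inside the completed Hall module $\hat{\mathcal{M}}_Q$ and then transport it across the Hall module integration map of Theorem \ref{thm:sdIntMap}. The key input is the Harder-Narasimhan identity \eqref{eq:sigmaHNRec}. Summing this identity over all $d \in \Lambda_Q^{\sigma,+}$, I would first establish the global categorical wall-crossing formula
\[
\mathbf{1}^{\sigma} \;=\; \overset{\longleftarrow}{\prod_{\mu \in \mathbb{Q}_{>0}}} \mathbf{1}_{\mu}^{\theta} \star \mathbf{1}^{\sigma,\theta}
\]
in $\hat{\mathcal{M}}_Q$, where $\mathbf{1}^{\sigma} = \sum_{d \in \Lambda_Q^{\sigma,+}} \mathbf{1}_d^{\sigma}$. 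The crucial observation is that the left-hand side makes no reference to the stability $\theta$, so the same formula holds verbatim with $\theta$ replaced by $\theta^{\prime}$.

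Next I would apply $\int_{\mathcal{M}}$ to both expressions for $\mathbf{1}^{\sigma}$. By Theorem \ref{thm:sdIntMap}, $\int_{\mathcal{M}}$ is a morphism of modules over $\int_{\mathcal{H}}$, so
\[
\int_{\mathcal{M}}\!\left( \mathbf{1}_{\mu_1}^{\theta} \cdots \mathbf{1}_{\mu_n}^{\theta} \star \mathbf{1}^{\sigma,\theta}\right) \;=\; \left(\int_{\mathcal{H}} \mathbf{1}_{\mu_1}^{\theta}\right) \cdots \left(\int_{\mathcal{H}}\mathbf{1}_{\mu_n}^{\theta}\right) \star \int_{\mathcal{M}}\mathbf{1}^{\sigma,\theta} = A_{\mu_1}^{\theta} \cdots A_{\mu_n}^{\theta} \star A^{\sigma,\theta}.
\]
Here I also use that $\int_{\mathcal{H}}$ is an algebra homomorphism (Reineke's lemma, recalled in the text). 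Summing produces
\[
\int_{\mathcal{M}} \mathbf{1}^{\sigma} \;=\; \overset{\longleftarrow}{\prod_{\mu \in \mathbb{Q}_{>0}}} A_{\mu}^{\theta} \star A^{\sigma,\theta},
\]
and the analogous identity with $\theta^{\prime}$. Since the two right-hand sides both compute the same element $\int_{\mathcal{M}} \mathbf{1}^{\sigma}$ of $\hat{\mathbb{S}}_Q$, the theorem follows.

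The only subtle point is to verify that the manipulations above are well-defined in the completed setting. The factors $\mathbf{1}_\mu^\theta$ for distinct slopes live in distinct graded pieces, and for each fixed $d \in \Lambda_Q^{\sigma,+}$ only finitely many tuples $(d^\bullet;d^\infty)$ of $\sigma$-weight $d$ satisfy the slope-decreasing condition of Proposition \ref{prop:sdHNFilt}; thus both sides of the categorical identity have well-defined $d$-graded components, and $\int_{\mathcal{M}}$ respects this grading and hence extends to the completions. The main potential obstacle is therefore bookkeeping convergence of the infinite products and checking that the module morphism property of $\int_{\mathcal{M}}$ passes to these completions, but once that is in place the proof is essentially the content of \eqref{eq:sigmaHNRec} combined with $\theta$-independence of $\mathbf{1}^{\sigma}$.
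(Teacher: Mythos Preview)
Your proposal is correct and follows essentially the same route as the paper: rewrite the summed $\sigma$-HN recursion \eqref{eq:sigmaHNRec} as the slope-ordered product identity $\overset{\longleftarrow}{\prod_{\mu>0}} \mathbf{1}_{\mu}^{\theta} \star \mathbf{1}^{\sigma,\theta} = \mathbf{1}^{\sigma}$ in $\hat{\mathcal{M}}_Q$, note its $\theta$-independence, and push through $\int_{\mathcal{M}}$ via Theorem \ref{thm:sdIntMap}. The extra care you take with completions is more detail than the paper records, but the argument is the same.
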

\begin{proof}
Writing equation \eqref{eq:sigmaHNRec} in terms of semistable characteristic functions with fixed slope gives the following equalities in $\hat{\mathcal{M}}_Q$:
\begin{equation}
\label{eq:HNinHallMod}
\overset{\longleftarrow}{\prod_{\mu \in \mathbb{Q}_{> 0}}} \mathbf{1}_{\mu}^{\theta } \star \mathbf{1}^{\sigma, \theta }= 
\mathbf{1}^{\sigma} = \overset{\longleftarrow}{\prod_{\mu \in \mathbb{Q}_{> 0}}} \mathbf{1}_{\mu}^{\theta^{\prime} } \star \mathbf{1}^{\sigma, \theta^{\prime} }.
\end{equation}
Applying $\int_{\mathcal{M}}$ and using Theorem \ref{thm:sdIntMap} gives the desired identity.
\end{proof}

The quantum dilogarithm is the series
\[
\mathbb{E}_q(x) = \sum_{n \geq 0} \frac{q^{\frac{n^2}{2}}}{(q^n-1) \cdots (q^n - q^{n-1})} x^n \in \mathbb{Q}(q^{\frac{1}{2}}) \pser{ x }.
\]
We recall one way that $\mathbb{E}_q(x)$ arises in DT theory. Fix a $\sigma$-compatible stability $\theta$ and a rigid absolutely stable representation $U$ with $\Dim\, U = d$. The subcategory of $Rep_{\mathbb{F}_q}(Q)$ generated by $U$ consists of semistable representations $\{U^{\oplus n}\}_{n \geq 1}$ with $Aut(U^{\oplus n}) \simeq GL_n(\mathbb{F}_q)$. The contribution to $\int_{\mathcal{H}} \mathbf{1}$ generated by $U$ is then
\[
A_U^{\theta } = \int_{\mathcal{H}} \sum_{n=0}^{\infty} [U^{\oplus n}] = \sum_{n=0}^{\infty} \frac{q^{\frac{n^2}{2}}}{\# GL_n (\mathbb{F}_q)} x_{nd} =\mathbb{E}_q(x_d).
\]

Suppose now that $U \simeq S(U)$. Under the $\mathbb{Z}_2$-action determined by $(S, \Theta)$, $End(U)$ is either the sign or trivial representation. In the former case a self-dual structure on $U^{\oplus n}$ is a non-singular skew-symmetric element in $End(U^{\oplus n})$. The self-dual representations generated by $U$ are $\{H(U^{\oplus n}) \}_{n \geq 1}$. These are semistable with $Aut_S(H(U^{\oplus n})) \simeq  Sp_{2n}(\mathbb{F}_q)$ so that the contribution to $\int_{\mathcal{M}} \mathbf{1}^{\sigma}$ generated by $U$ is
\[
A_U^{\mathfrak{sp}, \theta  } = \int_{\mathcal{M}} \sum_{n=0}^{\infty} [H(U^{\oplus n})] \\
= \sum_{n=0}^{\infty} \frac{q^{\frac{n(2n+1)}{2}}}{q^n \# GL_n (\mathbb{F}_{q^2})} \xi_{2n d} \\
= \mathbb{E}_{q^2}(q^{-\frac{1}{2}} x_d) \star \xi_0.
\]
When $End(U)$ is the trivial representation, a self-dual structure on $U^{\oplus n}$ is a non-singular symmetric element in $End(U^{\oplus n})$ so that the self-dual representations generated by $U$ are $\{U^{\oplus n, \varepsilon} \}_{n \geq 1, \varepsilon \in \{\pm\}}$, each being semistable with $Aut_S(U^{\oplus n, \varepsilon} ) \simeq O_n^{\varepsilon}(\mathbb{F}_q)$. The contribution is
\begin{eqnarray*}
A_U^{\mathfrak{o}, \theta  } &=&  \int_{\mathcal{M}} \sum_{n =0}^{\infty} \sum_{\varepsilon \in \{\pm \}} [U^{\oplus n, \varepsilon}] \\
&=& \sum_{n=0}^{\infty}  \frac{q^{\frac{n(2n-1)}{2}}q^{n}}{\# GL_n(\mathbb{F}_{q^2})} \xi_{2n d}  + \sum_{n=0}^{\infty}  \frac{q^{\frac{n(2n+1)}{2}}q^{-n}}{\# GL_n(\mathbb{F}_{q^2})} \xi_{(2n+1) d} \\
&=& \mathbb{E}_{q^2}(q^{\frac{1}{2}} x_d) \star \xi_0 + \mathbb{E}_{q^2}(q^{-\frac{1}{2}} x_d) \star \xi_d.
\end{eqnarray*}

In these calculations the factors $\mathbb{E}_{q^2}(q^{\bullet} x_d)$ represent the contributions of the hyperbolics $H(U^{\oplus n})$ to $\int_{\mathcal{M}} \mathbf{1}^{\sigma}$. The simple form of $A_U^{\mathfrak{sp}, \theta }$ reflects that all self-dual representations generated by $U$ are hyperbolic. In particular, there are no $\sigma$-stable representations. The form of $A_U^{\mathfrak{o}, \theta }$ is more interesting, consisting of two terms. The first includes contributions from the hyperbolics $H(U^{\oplus n})$ and the non-split representations $H(U^{\oplus n}) \oplus U^{\oplus 2, -}$. Note that over $\overline{\mathbb{F}}_q$ the latter are also hyperbolic. The second term consists of contributions from $H(U^{\oplus n}) \oplus U^{\varepsilon}$. It is this term that contains information about the absolutely $\sigma$-stable representations.

We now turn to the simplest case of Theorem \ref{thm:sdWallCross}.

\begin{Ex}
Let $Q$ be the $A_2$ Dynkin quiver
\[
\begin{tikzpicture}[thick,scale=.8,decoration={markings,mark=at position 0.6 with {\arrow{>}}}]
\draw[postaction={decorate}] (0,0)  to  (2,0);

\draw (0,-.4) node {\small $-1$};
\draw (2,-.4) node {\small $1$};

\fill (0,0) circle (2pt);
\fill (2,0) circle (2pt);
\end{tikzpicture}
\vspace{-5pt}
\]
The wall-crossing formula for ordinary quiver representations is the pentagon identity in $\hat{\mathbb{T}}_Q$:
\[
\mathbb{E}_q(x_1)  \cdot \mathbb{E}_q(x_{-1})= \mathbb{E}_q(x_{-1})  \cdot  \mathbb{E}_q(x_{(1,1)})  \cdot   \mathbb{E}_q(x_1).
\]
It is the simplest instance of the primitive wall-crossing formula for DT invariants \cite{dimofte2011}, \cite{kontsevich2011}.
The stabilities are $\theta_i=-i$ and $\theta^{\prime}_i=i$ on the left and right hand side of this equation respectively.

For orthogonal representations Theorem \ref{thm:sdWallCross} gives the $\hat{\mathbb{S}}_Q$-identity
\[
\mathbb{E}_q(x_1) \star \xi_0 = \mathbb{E}_q(x_{-1}) \star A^{\mathfrak{o},ss},
\]
the stabilities as above. The factor $A^{\mathfrak{o},ss}$ is generated by the non-simple indecomposable. As this representation does not admit an orthogonal structure we have $A^{\mathfrak{o},ss} = \mathbb{E}_{q^2}(q^{-\frac{1}{2}} x_{(1,1)}) \star \xi_0$. The wall-crossing formula for orthogonal representations therefore reads
\[
\mathbb{E}_q(x_1) \star \xi_0 = \mathbb{E}_q(x_{-1})  \cdot  \mathbb{E}_{q^2}(q^{-\frac{1}{2}} x_{(1,1)}) \star \xi_0.
\]

On the other hand the non-simple indecomposable does admit a symplectic structure, making it absolutely $\sigma$-stable. The symplectic wall-crossing formula then takes the form
\[
\mathbb{E}_q(x_1) \star \xi_0 = \mathbb{E}_q(x_{-1})  \cdot \left( \mathbb{E}_{q^2}(q^{\frac{1}{2}} x_{(1,1)}) \star \xi_0 + \mathbb{E}_{q^2}(q^{-\frac{1}{2}} x_{(1,1)}) \star \xi_{(1,1)}\right).
\]
\end{Ex}

According to Theorem \ref{thm:sdWallCross} the product
\[
\overset{\longleftarrow}{\prod_{\mu\in \mathbb{Q}_{> 0} }} A_{\mu}^{\theta } \star A^{\sigma, \theta }
\]
is independent of $\theta$. We will say that a $\sigma$-compatible stability $\theta$ is $\sigma$-generic if $\mu(d) = \mu(d^{\prime})$ implies $\langle d, d^{\prime} \rangle =0$ and if $d$ is a semistable dimension vector of slope zero, then $d \in \Lambda_Q^{\sigma, +}$, \textit{cf.} \cite{joyce2012}, \cite{mozgovoy2013} in the ordinary case. For such $\theta$ the DT series $A_{\mu}^{\theta }$ encodes the slope $\mu$ motivic DT invariants through the factorization
\begin{equation}
\label{eq:dtFactorization}
A_{\mu}^{\theta } = \prod_{\mu(d) = \mu } \prod_{n \in \mathbb{Z}} \mathbb{E}_q((-q^{\frac{1}{2}})^n x_d)^{(-1)^n \Omega_{d,n}^{\theta}}.
\end{equation}
See \cite{dimofte2011}, \cite{kontsevich2011}. We would like to have an analogue of equation \eqref{eq:dtFactorization} in which the orientifold DT invariants are defined by factorizations of $A^{\sigma, \theta }$. Theorem \ref{thm:sdWallCross} would then give a wall-crossing formula for these invariants.

Let $(Q,\sigma)$ be of Dynkin type $A$ or a disjoint union of a quiver of Dynkin type $ADE$ with its opposite; all other finite type quivers with involutions are disjoint unions of these. Then $\Omega^{\theta}_{d,n} =0$ if $n \neq 0$ so we write $\Omega^{\theta}_d$ for $\Omega^{\theta}_{d,0}$. In fact $\Omega_d^{\theta}$ is non-zero only if $d$ is a positive root of the root system attached to $Q$, in which case it is zero or one depending on $\theta$. Note that all stable representations are absolutely stable and that any duality structure is equivalent to either the orthogonal or symplectic duality.

Let $U$ be as above and let $M$ be an absolutely $\sigma$-stable representation with $\Dim \, M = e$ and $r$ regular summands. Proposition \ref{prop:sdStabAut} implies $Aut_S(M) \simeq \mathbb{Z}_2^r$. A modification of the previous calculations shows the contribution of $\{ H(U^{\oplus n}) \oplus M \}_{n \geq 0}$ to $\int_{\mathcal{M}} \mathbf{1}^{\sigma}$ to be
\[
A^{\sigma}_{U,M} = \frac{1}{2^r} \mathbb{E}_{q^2}(q^{\frac{1}{2} - \chi(e,d) - \mathcal{E}(d)} x_{d})\star  \xi_{e}.
\]
Since $\mathcal{E}(d)$ is one or zero depending on whether $U$ does or does not admit a self-dual structure, respectively, this formula specializes to those derived above. Varying $U$ and $M$ over all ($\sigma$-)stable representations (including the $2^r$ $\mathbb{F}_q$-forms of $M \otimes_{\mathbb{F}_q} \overline{\mathbb{F}}_q$) shows that we can write
\begin{equation}
\label{eq:oridtFactorization}
A^{\sigma, \theta } = \sum_{e \in \Lambda_Q^{\sigma, +}} A_{\mu=0}^{\theta } (q, \{ q^{\frac{1}{2} - \chi(e,d) - \mathcal{E}(d) } x_d\}_{d} ) \star \Omega_{e}^{\sigma, \theta} \xi_{e}
\end{equation}
for some non-negative integers $\Omega_{e}^{\sigma, \theta}$. We summarize our calculations as follows.

\begin{Thm}
\label{thm:ftOriDt}
Let $Q$ be a finite type quiver with involution and let $\theta$ be a $\sigma$-generic stability. The orientifold DT series $A^{\sigma, \theta }$ admits a factorization of the form \eqref{eq:oridtFactorization}. Explicitly,
\begin{enumerate}
\item for hyperbolic duality structures (disjoint unions, symplectic representations of $A_{2n+1}$ and orthogonal representations of $A_{2n}$) $\Omega_e^{\sigma, \theta} = \delta_{e, 0}$ for all $e \in \Lambda_Q^{\sigma, +}$, and
\item for non-hyperbolic duality structures $\Omega_e^{\sigma, \theta} =1$ if $e=0$ or $e= e_1 + \cdots + e_k$ for pairwise distinct $e_i \in \Lambda_Q^{\sigma, +}$ with $\Omega_{e_i}^{\theta}=1$. Otherwise $\Omega_e^{\sigma, \theta} =0$.
\end{enumerate}
\end{Thm}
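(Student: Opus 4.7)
The plan is to deduce the statement directly from the factorization formula \eqref{eq:oridtFactorization}, which the paper has already extracted from the single-orbit contribution $A^{\sigma}_{U,M}$, by enumerating the absolutely $\sigma$-stable representations in finite type. Because $(Q,\sigma)$ is of the stated finite type and $\theta$ is $\sigma$-generic, every ordinary stable representation of slope zero is absolutely stable, rigid, and indecomposable, corresponding to a positive root of the underlying $ADE$ root system with $\Omega_d^{\theta}\in\{0,1\}$. The contribution of the subcategory generated by a single such $U$ to $A_{\mu=0}^{\theta}$ is the quantum dilogarithm factor $\mathbb{E}_q(x_{\Dim U})$, so $A_{\mu=0}^{\theta}$ is an ordered product of such factors over the stable positive roots of slope zero.

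Next I would invoke Proposition \ref{prop:sdStab} to write any absolutely $\sigma$-stable $M$ over $\overline{\mathbb{F}}_q$ as a direct sum $\bigoplus_l M_l$ of pairwise non-isomorphic regular $\sigma$-stable summands with $m_l=1$, and then appeal to Proposition \ref{prop:sdStabAut} to conclude $\mathrm{Aut}_S(M)\simeq\mathbb{Z}_2^r$ together with the existence of exactly $2^r$ distinct $\mathbb{F}_q$-forms. Summing the contributions $A^{\sigma}_{U,M}=\tfrac{1}{2^r}\mathbb{E}_{q^2}(q^{\tfrac12-\chi(e,d)-\mathcal{E}(d)}x_d)\star\xi_e$ over all $\mathbb{F}_q$-forms of a fixed $\overline{\mathbb{F}}_q$-isomorphism class cancels the $2^{-r}$ prefactor, contributing exactly one copy of the shifted $\mathbb{E}_{q^2}$-product per absolutely $\sigma$-stable class. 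Comparing with \eqref{eq:oridtFactorization}, this identifies $\Omega_e^{\sigma,\theta}$ with the number of $\overline{\mathbb{F}}_q$-isomorphism classes of absolutely $\sigma$-stable representations of dimension vector $e$, which by the above decomposition equals the number of ways to write $e=e_1+\cdots+e_k$ with pairwise distinct $\sigma$-fixed stable positive roots $e_i$.

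The two cases then reduce to a finite check that a $\sigma$-fixed positive root $e_i$ admits a $\sigma$-stable self-dual structure, which in each case is unique if it exists. In the three hyperbolic cases, namely $Q\sqcup Q^{op}$ (where every indecomposable lives in a single connected component and so cannot be $\sigma$-fixed), symplectic $A_{2n+1}$, and orthogonal $A_{2n}$ (where the $\sigma$-fixed indecomposables have the wrong dimension parity or wrong symmetry at the central node/arrow, as already seen in the $A_{2n}$ example), no non-zero regular $\sigma$-stable exists, forcing $\Omega_e^{\sigma,\theta}=\delta_{e,0}$. In the non-hyperbolic $A$-type cases each $\sigma$-symmetric positive root does carry the required self-dual structure, yielding the claimed combinatorial formula for $\Omega_e^{\sigma,\theta}$; the uniqueness assertion (that no multiplicities appear) is already forced by the $m_l=1$ conclusion of Proposition \ref{prop:sdStab}.

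The main obstacle will be checking that the ordered product assembly works out: one needs the sum over pairs $(U,M)$ on the right-hand side of \eqref{eq:oridtFactorization} to respect the noncommutative ordering of $\mathbb{E}$-factors dictated by the module action \eqref{eq:quantumModule} and by the $\sigma$-Harder--Narasimhan recursion \eqref{eq:sigmaHNRec}, so that substituting $x_d\mapsto q^{\tfrac12-\chi(e,d)-\mathcal{E}(d)}x_d$ inside $A_{\mu=0}^{\theta}$ literally reproduces the ordered $\mathbb{E}_{q^2}$-product arising from summing the $A^{\sigma}_{U,M}$. Once this bookkeeping is verified, the Dynkin enumeration above closes the argument.
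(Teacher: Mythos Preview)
Your proposal is correct and follows essentially the same route as the paper: the theorem is stated as a summary of the preceding calculations, and you reproduce exactly those calculations --- the single-orbit contribution $A^{\sigma}_{U,M}$, the cancellation of $2^{-r}$ against the $2^r$ Galois forms from Proposition~\ref{prop:sdStabAut}, and the identification of $\Omega_e^{\sigma,\theta}$ with the count of absolutely $\sigma$-stable classes of dimension vector $e$ via Proposition~\ref{prop:sdStab}. The case split you give for hyperbolic versus non-hyperbolic dualities is precisely the one implicit in the paper's remark that $\mathcal{E}(d)$ is one or zero according to whether the $\sigma$-symmetric indecomposable $U$ does or does not carry a self-dual structure.

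Regarding the obstacle you flag: the $\sigma$-genericity hypothesis resolves it. By definition, any two semistable dimension vectors $d,d'$ of equal slope satisfy $\langle d,d'\rangle=0$, so the relevant $x_d$ commute in $\hat{\mathbb{T}}_Q$ and the product $A_{\mu=0}^{\theta}$ is unordered; moreover every slope-zero stable dimension vector is $\sigma$-fixed, so one also has $\langle d,\sigma(d')\rangle=0$, which makes the module action \eqref{eq:quantumModule} behave multiplicatively under iteration and turns the substitution $x_d\mapsto q^{\frac{1}{2}-\chi(e,d)-\mathcal{E}(d)}x_d$ into a genuine identity rather than a formal bookkeeping device.
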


\begin{Rems}
\begin{enumerate}
\item The invariant $\Omega_e^{\sigma, \theta}$ defined by equation \eqref{eq:oridtFactorization} is equal to the stacky number of absolutely $\sigma$-stable self-dual $\mathbb{F}_q$-representations of dimension vector $e$. 
Alternatively, $\Omega_e^{\sigma, \theta}$ is equal to the Euler characteristic of the moduli space of $\sigma$-stable representations of dimension vector $e$, which is either empty or consists of single point.

\item The invariants $\Omega_e^{\sigma, \theta}$ satisfy the primitive wall-crossing formula \eqref{eq:oriWCF}. The most basic instance of this is seen in the $A_2$ example above.

\item The form of equations \eqref{eq:dtFactorization} and \eqref{eq:oridtFactorization} reflect the difference between ordinary and $\sigma$-Jordan-H\"{o}lder filtrations: in the latter there is only one self-dual factor, leading to the linear structure of equation \eqref{eq:oridtFactorization}.

\item The factorization of $A^{\sigma}$ by equations \eqref{eq:dtFactorization} and \eqref{eq:oridtFactorization} encodes all ordinary and orientifold DT invariants and can be regarded as a way of extracting the pure orientifold invariants from $A^{\sigma}$. This is similar to the definition of BPS invariants in topological string theory with orientifolds, \textit{cf.} \cite{sinha2000}, \cite{bouchard2004}, \cite{walcher2009}, where the free energy is decomposed into its ordinary and orientifold contributions.
\end{enumerate}
\end{Rems}

\subsection{Quivers with potential}
\label{sec:quivPot}
We consider briefly the extension of the Hall module formalism to quivers with potential. A potential is an element $W \in kQ \slash [kQ,kQ]$ and a representation of $(Q,W)$ is a finite dimensional module over the Jacobian algebra  $J_{Q,W} = kQ \slash \langle \partial W \rangle$. For each $d \in \Lambda_Q^+$ the potential induces a trace function $w:R_d \rightarrow k$. Given a duality structure on $Rep_k(Q)$, the potential $W$ is called $S$-compatible if its trace $w$ is $S$-invariant. In this case there is an induced duality structure on the abelian category of finite dimensional $J_{Q,W}$-modules. As the homological dimension of this category is generally greater than one, Hall algebra techniques cannot be applied directly to study its DT theory. Instead, we use the equivariant approach of Mozgovoy \cite{mozgovoy2013c}.

Suppose we are given a weight map $\wt : Q_1 \rightarrow \mathbb{Z}_{\geq 0}$. This defines a $k^{\times}$-action on $R_d$ as follows. Given $M \in R_d$ and $t \in k^{\times}$, the representation $t \cdot M$ has the same underlying vector space as $M$ but with structure maps $t^{\wt(\alpha)} m_{\alpha}$. Assume that $W$ is homogeneous of weight one with respect to $\wt$, that is, $w(t\cdot M) = t w(M)$.  If $Q$ has an involution $\sigma$, we additionally assume that $\wt$ is $\sigma$-invariant. This implies that $R_d^{\sigma} \subset R_d$ is $k^{\times}$-stable.

\begin{Ex}
The quiver for $\mathbb{C}^3$ is a single node with three loops $\alpha, \beta, \gamma$ and potential $W = \alpha \beta \gamma - \alpha \gamma \beta$. Give $\alpha$ weight one and the other arrows weight zero. Consider the trivial involution and fix a duality structure. Then $W$ is $S$-compatible if and only if $\tau_{\alpha} \tau_{\beta} \tau_{\gamma} = -1$. Self-dual representations describe $\mathcal{N}=4$ or $\mathcal{N}=2$ supersymmetric gauge theories on the worldvolume of $D3$-branes placed on $O3$- or $O7$-planes. These are gauge theories with orthogonal or symplectic gauge groups and matter in the symmetric or exterior square of the defining representation. More generally, examples arise from quivers with potential arising from consistent brane tilings that admit an orientifold action, such as the conifold and $\mathbb{C}^3 \slash \mathbb{Z}_3$ quivers \cite{franco2007}.
\end{Ex}

Let $k = \mathbb{F}_q$. The equivariant Hall algebra \cite{mozgovoy2013c} is the subalgebra $\mathcal{H}_Q^{eq} \subset \mathcal{H}_Q$ spanned by elements $f= \sum_U a_U [U]$ satisfying $a_U = a_{t\cdot U}$ for all representations $U$ and $t \in \mathbb{F}_q^{\times}$. For each $t \in \mathbb{F}_q$, denote by $f_t = \sum_{w(U) = t} a_U [U]$. In \cite[Proposition 5.12]{mozgovoy2013c} it was shown that the map
\[
\int_{\mathcal{H}}^{eq}: \mathcal{H}_Q^{eq} \rightarrow \hat{\mathbb{T}}_Q, \;\;\;\;\; f \mapsto \int_{\mathcal{H}} f_0 - \int_{\mathcal{H}} f_1
\]
is an algebra homomorphism. Analogously, we define the equivariant Hall module $\mathcal{M}_Q^{eq}$, a $\mathcal{H}_Q^{eq}$-submodule of $\mathcal{M}_Q$, and an equivariant integration map $\int_{\mathcal{M}}^{eq}: \mathcal{M}_Q^{eq} \rightarrow \hat{\mathbb{S}}_Q$, a $\int_{\mathcal{H}}^{eq}$-morphism. Define the orientifold DT series of a quiver with $S$-compatible potential and $\sigma$-compatible stability by
\[
A^{\sigma,\theta } = \int_{\mathcal{M}}^{eq} \mathbf{1}^{\sigma, \theta } \in \hat{\mathbb{S}}_Q.
\]
As in \cite{mozgovoy2013c}, this definition is motivated by the approach to DT theory via motivic vanishing cycles \cite{behrend2013}, extended to non-generic stabilities. 

Repeating the proofs from the sections above with equivariant instead of ordinary integration maps, we find a recursive expression for $A^{\sigma,\theta }$ in terms of $A_d$ and $A_d^{\sigma}$ and a wall-crossing formula relating the DT series $\{A^{\theta }_{\mu} \}_{\mu \in \mathbb{Q}_{> 0} }$ and $A^{\sigma,\theta }$ for different $\sigma$-compatible $\theta$.

\footnotesize

\bibliographystyle{plain}
\bibliography{mybib}
 
\end{document}